\theoremstyle{plain}
\newtheorem{thm}{Theorem}[section]
\newtheorem{prop}[thm]{Proposition} 
\newtheorem{lem}[thm]{Lemma}
\theoremstyle{definition} 
\newtheorem{defn}[thm]{Definition} 
\newtheorem{exmp}[thm]{Example}
\theoremstyle{remark}
\newtheorem{rem}[thm]{Remark}
\newcommand{\id}{\mathrm{id}}
\DeclareMathOperator{\ev}{ev}
\DeclareMathOperator{\ad}{ad}
\DeclareMathOperator{\Hom}{Hom}
\newcommand{\lie}[1]{\mathfrak{#1}}
\DeclareMathOperator{\MC}{MC}
\newcommand{\Bigwedge}{\mathord{\adjustbox{valign=B,totalheight=.55\baselineskip}{$\bigwedge$}}{}}
\newcommand{\invlim}{\mathop{\mathpalette\varlim@{\leftarrowfill@\scriptstyle}}\nmlimits@}
\newcommand{\dirlim}{\mathop{\mathpalette\varlim@{\rightarrowfill@\scriptstyle}}\nmlimits@}
\title{Gauge equivalence for complete $L_{\infty}$-algebras}
\author{Ai Guan}
\address{Department of Mathematics and Statistics\\
Lancaster University\\
Lancaster\\ LA1 4YF\\ UK}
\email{a.guan@lancaster.ac.uk}
\begin{document}

\begin{abstract}
  We introduce a notion of left homotopy for Maurer--Cartan elements in $L_{\infty}$-algebras and $A_{\infty}$-algebras, and show that it corresponds to gauge equivalence in the differential graded case.  
  From this we deduce a short formula for gauge equivalence, and provide an entirely homotopical proof to Schlessinger--Stasheff's theorem. 
  As an application, we answer a question of T.~Voronov, proving a non-abelian Poincar\'e lemma for differential forms taking values in an $L_{\infty}$-algebra.
\end{abstract}

\maketitle

\section{Introduction}

A Maurer--Cartan element in a differential graded Lie algebra (dgla) $V$ is a degree 1 element $\xi \in V$ satisfying $d\xi + \frac{1}{2}[\xi,\xi]=0$.
It is important to understand homotopies between Maurer--Cartan elements; for example, deformation problems are governed by Maurer--Cartan elements up to an appropriate notion of homotopy~\cite{ss,man99}.
Thus many different notions of homotopy have been studied for Maurer--Cartan elements in dglas and, more generally, in $L_{\infty}$-algebras; see~\cite{dp16} for an up-to-date and extensive survey. 

The Schlessinger--Stasheff theorem~\cite{ss} states that two Maurer--Cartan elements in a pronilpotent dgla are Sullivan homotopic (called Quillen homotopic in~\cite{dp16}) if and only if they are gauge equivalent.
One goal of this paper is to provide an entirely homotopical proof of this result, and to extend to it to $L_{\infty}$-algebras and $A_{\infty}$-algebras under certain completeness conditions.
To do this, we introduce a new homotopy relation for Maurer--Cartan elements in complete $L_{\infty}$-algebras and $A_{\infty}$-algebras. 
Maurer--Cartan elements are interpreted as morphisms of commutative differential graded algebras (cdgas); this is reviewed in Section~\ref{prelim} along with other relevant background on $L_{\infty}$-algebras and $A_{\infty}$-algebras. 
Two Maurer--Cartan elements are then defined to be left homotopic if they are left homotopic between morphisms in the category of cdgas, equipped with the model category structure of~\cite{hin97}.

As motivation for our definition, in Section~\ref{gauge_left_dgla} we show that there is a model structure on the category of complete dglas, namely that of~\cite{lm15}, in which gauge equivalence coincides with left homotopy, a result also obtained by \cite{rob18}. 
The results in this section should be considered Koszul dual to the approach taken in the rest of the paper, where we choose to work in the setting of cdgas in order for results to immediately generalise to the setting of $L_{\infty}$-algebras.
There are also close parallels between the approach in Section~\ref{gauge_left_dgla} and the recent papers \cite{bm13,bfmt18}, in which it is shown that gauge equivalence coincides with left homotopy for a larger class of dglas with a different model structure. However, their result only holds in the generality of dglas, and the method used does not seem to easily generalise to $L_{\infty}$-algebras. 

The rest of the paper is organised as follows.
In Section~\ref{left_cdga}, we define left homotopy of Maurer--Cartan elements and use it to deduce a new short formula for gauge equivalence in \cref{result:gauge_short_a} and \cref{result:gauge_short_b}.
In Section~\ref{gauge_formulae}, we prove combinatorial formulae for left homotopy in terms of rooted trees and give a direct proof that left homotopy coincides with gauge equivalence. 
As an application, in Section~\ref{application}, we answer a question posed by Voronov in~\cite{vor12}, and prove a version of the Poincar\'e lemma for differential forms taking values in an $L_{\infty}$-algebra.

\subsection{Notation and conventions}

Throughout the paper, $k$ denotes a field of characteristic zero. All unadorned tensor products will be over $k$, and all vector spaces will be over $k$ and cohomologically $\mathbb{Z}$-graded.
The suspension $\Sigma V$ of a graded vector space $V$ is graded by $(\Sigma V)^i = V^{i+1}$.

We will often refer to pseudocompact vector spaces -- these are projective limits of finite-dimensional vector spaces, equipped with the inverse limit topology. 
In particular, the $k$-linear dual $V^*$ of a discrete vector space $V$ is pseudocompact. 
The dual of a pseudocompact vector space is defined to be its topological dual; hence $V \cong V^{**}$ is always true, and the natural map $x \mapsto ev_x$ is $\ev_x(v^*) = (-1)^{|v^*||x|}v^*(x)$ by the Koszul sign rule. 
For two pseudocompact vector spaces $V$ and $W$, the space of morphisms $\Hom(V,W)$ is assumed to mean the space of continuous linear maps, and the tensor product $V \otimes W$ is assumed to mean the completed tensor product.
If $V = \invlim_i V_i$ is pseudocompact and $W$ is discrete, then their tensor product is defined to be $V \otimes W = \invlim_i V_i \otimes W$; note that in general this is neither discrete nor pseudocompact.

We abbreviate a differential graded (Lie) algebra or unital commutative differential graded algebra by dg(l)a or cdga.
The completed tensor and symmetric algebras are respectively $\widehat{T}V
\coloneqq \prod_{i=0}^{\infty} T^i(V)$ and $\widehat{S}V \coloneqq \prod_{i=0}^{\infty} S^i(V) $.
We will be mostly working in the context of complete $L_{\infty}$-algebras (as defined in \cref{prelim}), so the completion is usually unnecessary.
%where $T^i(V) = V^{\otimes i}$ and $S^i(V) = (T^i V)_{S_n}$ is the set of coinvariants of $T^i(V)$ under the symmetric group $S_n$.

\subsection*{Acknowledgements}
The author thanks Chris Braun and Andrey Lazarev for numerous helpful discussions during the writing of this paper, and for their comments and corrections on earlier drafts.
Thanks are also given to an anonymous referee for their careful reading and thoughtful comments and corrections.

\section{Preliminaries on $L_{\infty}$- and $A_{\infty}$-algebras}
\label{prelim}

We start by recalling basic facts on $L_{\infty}$- and $A_{\infty}$-algebras, Maurer--Cartan elements, Sullivan homotopy, and gauge equivalence for the differential graded case.
All definitions given in this section are standard and agree with those commonly found in the literature, except for the notion of completeness: in particular, the definition given here agrees with \cite{lm15} but not with \cite{bfmt18}.

\begin{defn}[following~\cite{hl09}]
  \label{def:inftystr}
  Let $V$ be a graded vector space. 
  \begin{enumerate}
    \item An \emph{$L_{\infty}$-structure} on $V$ is a continuous degree 1 derivation $m$ of the complete cdga $\widehat{S}\Sigma^{-1}V^*$, such that $m^2=0$ and $m$ has no constant term. 
      The pair $(V,m)$ is called an \emph{$L_{\infty}$-algebra}, and $(\widehat{S}\Sigma^{-1}V^*,m)$ is called its \emph{representing complete cdga}. 

      Given two $L_{\infty}$-algebras $(U,m_U)$ and $(V,m_V)$, an \emph{$L_{\infty}$-morphism} $U \to V$ is a continuous cdga map 
      $(\widehat{S}\Sigma^{-1}V^*,m_V) \to (\widehat{S}\Sigma^{-1}U^*,m_U)$.

    \item An \emph{$A_{\infty}$-structure} on $V$ is a continuous degree 1 derivation $m$ of the complete dga $\widehat{T}\Sigma^{-1}V^*$, such that $m^2=0$ and $m$ has no constant term.    
      The pair $(V,m)$ is called an \emph{$A_{\infty}$-algebra}, and $(\widehat{T}\Sigma^{-1}V^*,m)$ is called its \emph{representing complete dga}.

      Given two $A_{\infty}$-algebras $(U,m_U)$ and $(V,m_V)$, an \emph{$A_{\infty}$-morphism} $U \to V$ is a continuous dga map 
      $(\widehat{T}\Sigma^{-1}V^*,m_V) \to (\widehat{T}\Sigma^{-1}U^*,m_U)$.
  \end{enumerate}
\end{defn}

One recovers the standard definition of an $L_{\infty}$-structure as a sequence of graded maps as follows: 
By definition, the derivation $m$ is determined by its components $m_i \colon \Sigma^{-1}V^* \to S^i \Sigma^{-1}V^*$, $i \ge 1$.
Dualise the components $m_i$ and apply the canonical identification of $S_n$-invariants and $S_n$-coinvariants, to get graded symmetric maps $\ell_i \colon S^i\Sigma V \to \Sigma V$ of degree 1, with $m_i = \frac{1}{i!} \ell_i^*$.
The condition $m^2=0$ then translates into higher Jacobi identities.

Under the identification $S^i\Sigma V \cong \Bigwedge^i V$, an $L_{\infty}$-structure on $V$ is equivalently a sequence of graded antisymmetric brackets 
$[-,\dots,-]_i \colon \Bigwedge^i V \to V$ of degree $2-i$.
For later convenience, we adopt the convention that the graded symmetric and graded antisymmetric operations are related by $\ell_i = \Sigma\,[-,\dots,-]_i\,(\Sigma^{-1})^{\otimes i}$, so that by the Koszul sign rule, 
\[
  \ell_i(x_1,\dots\!,x_i) 
  = (-1)^{\sum_{j=1}^{i-1} (i-j)|x_j|} \,\Sigma\,[\Sigma^{-1}x_1,\dots,\Sigma^{-1}x_i].
\]
%\begin{rem}
%  Another convention is to define $\ell_i = \Sigma\,[-,\dots,-]'_i\,(\Sigma^{\otimes i})^{-1}$.
%  Since $\Sigma^{\otimes i}(\Sigma^{-1})^{\otimes i} = (-1)^{{i+1 \choose 2}} \id$, these two conventions are related by $[-,\dots,-]'_i = (-1)^{{i+1 \choose 2}} [-,\dots,-]_i$. 
%\end{rem}

Analogously, an $A_{\infty}$-structure is equivalent to a sequence of graded maps $T^i V \to V$, $i \ge 1$, of degree $2-i$, satisfying higher associativity identities.
%~\cite{kel01}. 
Note that factorials do not appear in the $A_{\infty}$-algebra case, because there is no need to identify invariants and coinvariants.

For our purposes, $V$ will often be a pseudocompact vector space instead of discrete. 
In this case, an $L_{\infty}$-structure, $A_{\infty}$-structure, etc., on $V$ is defined by replacing the complete cdga $\widehat{S}\Sigma^{-1}V^*$ in \cref{def:inftystr} with the cdga $S\Sigma^{-1}V^*$, and replacing the complete dga $\widehat{T}\Sigma^{-1}V^*$ with the dga $T\Sigma^{-1}V^*$.

Throughout this paper we will use the language of Quillen's closed model categories~\cite{qui69}; see~\cite{ds95} for a review. 
We will consider the categories of cdgas and dgas with the model category structures of~\cite{hin97}, in which the weak equivalences are quasi-isomorphisms and the fibrations are degreewise surjections. 
All objects are fibrant, and the cofibrant objects are described as below. 

\begin{defn}
  A \emph{Sullivan cdga} (resp.~\emph{Sullivan dga}) is defined to be a cdga of the form $SV$ (resp.~dga of the form $TV$) such that $V$ is a graded vector space admitting a filtration
  \[
    0 \subseteq V_0 \subseteq V_1 \subseteq V_2 \subseteq \dots \subseteq V,\quad V=\bigcup_{i \ge 0} V_i,
  \]
  that is compatible with the differential $d$, i.e.~$d(V_i) \subseteq SV_{i-1}$ (resp.~$d(V_i) \subseteq TV_{i-1}$) for all $i$.
\end{defn}

The cofibrant objects in the model categories of (c)dgas are precisely retracts of Sullivan (c)dgas. A proof appears in, for example,~\cite[Theorem 9.1]{pos11} for dgas; the same proof also works for cdgas.

\begin{defn} 
  An $L_{\infty}$-algebra (resp.~$A_{\infty}$-algebra)\/ $V$ is \emph{complete} if $V$ is pseudocompact and its representing cdga (resp.~dga) is cofibrant in the model category of cdgas (resp.~dgas).
\end{defn}

\begin{defn}\label{mcelement}\hfill
  \begin{enumerate}
    \item Let $(V,m)$ be a complete $L_{\infty}$-algebra and $A$ be a cdga.
      An element $\xi \in V \otimes A$ is \emph{Maurer--Cartan}\index{Maurer--Cartan!element, equation ($L_{\infty}$)} if it has degree 1 and satisfies the \emph{Maurer--Cartan equation}
      \[
        (\id \otimes d_A)(\xi) + \sum_{i \ge 1} \frac{1}{i!} [\xi,\dots,\xi]^A_i = 0,
      \]
      where $[-,\dots,-]_i^A$ is the $A$-linear extension of $[-,\dots,-]_i$.

    \item Let $(V,m)$ be a complete $A_{\infty}$-algebra and $A$ be a cdga.
      An element $\xi \in V \otimes A$ is \emph{Maurer--Cartan}\index{Maurer--Cartan!element, equation ($A_{\infty}$)} if it has degree 1 and satisfies the \emph{Maurer--Cartan equation}
      \[
        (\id \otimes d_A)(\xi) + \sum_{i \ge 1} m_i^A(\xi,\dots,\xi) = 0,
      \]
      where $m_i^A$ is the $A$-linear extension of $m_i$.
  \end{enumerate} 
  The set of all Maurer--Cartan elements in $V \otimes A$ is denoted $\MC(V,A)$.
  In the case where $A=k$, we write $\MC(V,k)$ simply as $\MC(V)$.
\end{defn}

\begin{rem}
  The completeness condition on the $L_{\infty}$- and $A_{\infty}$-algebra ensures that the infinite sums converge in \cref{mcelement}.
\end{rem}

Given any cdga $A$ and complete $L_{\infty}$-algebra $V$, a Maurer--Cartan element in the $L_{\infty}$-algebra $V \otimes A$ is represented by a cdga map $S\Sigma^{-1}V^* \to A$. 
Similarly, given any dga $A$, a Maurer--Cartan element in the $A_{\infty}$-algebra $V \otimes A$ is represented by a dga map $T\Sigma^{-1}V^* \to A$.
For details see, for example, Proposition 2.2 and Remark 2.3 in~\cite{cl11}. 

Next we consider a notion of homotopy for Maurer--Cartan elements.
Let $V$ be an $L_{\infty}$-algebra or $A_{\infty}$-algebra.
Consider the $L_{\infty}$-algebra or $A_{\infty}$-algebra $V[t,dt] \coloneqq V \otimes k[t,dt]$, where $k[t,dt]$ denotes the free cdga generated by a degree 0 symbol $t$ and a degree 1 symbol $dt$. 
Then there are two natural cdga maps $f_0, f_1 \colon k[t,dt] \to k$, defined by sending $t$ to 0 and 1 respectively, and sending $dt$ to 0.
%Then we have two $L_{\infty}$-morphisms $f_0, f_1 \colon V[t,dt] \to V$, defined by sending $t$ to 0 and 1 respectively, and sending $dt$ to 0.

\begin{defn}
  Let $(V,m)$ be a complete $L_{\infty}$-algebra or $A_{\infty}$-algebra, and $A$ be a cdga.
  Two elements $\xi, \eta \in \MC(V,A)$ are \emph{Sullivan homotopic}\index{Sullivan homotopic} if there exists an element $h \in \MC(V, A[t,dt])$ such that 
  $(\id \otimes f_0)(h) = \xi$ and 
  $(\id \otimes f_1)(h)  = \eta$.
\end{defn}

We recall the notions of a good path object and a right homotopy in the sense of Quillen's model categories. 
For an object $X$ in a model category $\mathbf{C}$, a \emph{path object} for $X$ is an object $X^I$ in $\mathbf{C}$ with a factorization
\begin{center}
  \begin{tikzcd}[sep=scriptsize]
    X \arrow[r,"i","\sim"']\arrow[rr,"\Delta_X",swap,bend right] & X^I \arrow[r,"p"] & \phantom{X^I} & \hspace{-4em} X \times X,
  \end{tikzcd}
\end{center}
with $i$ a weak equivalence, where $\Delta_X$ is the diagonal map $(\id_X,\id_X)$. 
The path object $X^I$ is called \emph{good} if additionally, $p$ is a fibration. 

The following result is well-known; see for example~\cite{laz13}.

\begin{prop}
  Let\/ $(V,m)$ be a complete $L_{\infty}$-algebra or $A_{\infty}$-algebra, and let $A$ be a cdga.
  Two Maurer--Cartan elements\/ $\xi, \eta \in \MC(V, A)$ are Sullivan homotopic if and only if their representing \textup(c\textup)dga maps are right homotopic in the model category of \textup(c\textup)dgas.
\end{prop}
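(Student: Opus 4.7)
The plan is to show that $A[t,dt] = A \otimes k[t,dt]$ serves as a good path object for $A$ in the model category of (c)dgas, and then to observe that a right homotopy via this path object is tautologously the same data as a Sullivan homotopy.

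First I would exhibit the factorization
\[
  A \xrightarrow{\;i\;} A[t,dt] \xrightarrow{(f_0,\,f_1)} A \times A,
\]
where $i(a) = a \otimes 1$. The map $i$ is a weak equivalence because $k \to k[t,dt]$ is a quasi-isomorphism (the cochain complex $k[t,dt]$ is acyclic away from degree $0$, where it equals $k$, via the usual integration contraction), and tensoring with $A$ over the field $k$ preserves quasi-isomorphisms. The map $(f_0,f_1)$ is a degreewise surjection because each $f_j \colon A[t,dt] \to A$ already is (take $a \otimes 1$ as a preimage for any $a \in A$), and the kernel of $f_0$ maps surjectively onto $A$ via $f_1$, so the product map is surjective. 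Hence $(f_0,f_1)$ is a fibration, and $A[t,dt]$ is a good path object for $A$.

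Next, because $(V,m)$ is complete, its representing (c)dga $R \coloneqq S\Sigma^{-1}V^*$ (or $T\Sigma^{-1}V^*$ in the $A_\infty$ case) is cofibrant. Thus right homotopy between maps out of $R$ is a well-defined equivalence relation that may be tested against any good path object of the target, in particular against $A[t,dt]$. It then suffices to show that right homotopies $R \to A[t,dt]$ between $f_\xi$ and $f_\eta$ correspond bijectively to Sullivan homotopies between $\xi$ and $\eta$.

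This correspondence is an immediate application of the representability recalled just above the proposition: (c)dga maps $R \to A[t,dt]$ are in natural bijection with Maurer--Cartan elements $h \in \MC(V, A[t,dt])$. Under this bijection, postcomposition with $\id_A \otimes f_j$ on the target side is exactly $(\id \otimes f_j)(h)$ on the Maurer--Cartan side, so the endpoint conditions $(f_0, f_1) \circ H = (f_\xi, f_\eta)$ translate verbatim to $(\id \otimes f_0)(h) = \xi$ and $(\id \otimes f_1)(h) = \eta$. The main (mild) obstacle is the verification that $A[t,dt]$ is genuinely a good path object, since one must check the weak equivalence without assuming $A$ is cofibrant; this is handled by exactness of $- \otimes_k A$ on cochain complexes of $k$-vector spaces.
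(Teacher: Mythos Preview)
Your proof is correct and follows essentially the same approach as the paper: exhibit $A[t,dt]$ as a good path object for $A$ and then identify Sullivan homotopies with right homotopies through this path object via the representability of Maurer--Cartan elements. You supply more detail than the paper does---in particular the verification that $i$ is a weak equivalence and $(f_0,f_1)$ a fibration, and the explicit invocation of cofibrancy of the representing (c)dga to justify testing right homotopy against a single good path object---but the argument is the same.
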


\begin{proof}
  This is immediate from regarding $h \in \MC(V \otimes A[t,dt])$ as a cdga map $S\Sigma^{-1}V^* \to A[t,dt]$ or a dga map $T\Sigma^{-1}V^* \to A[t,dt]$. Then
  \begin{center}
    \begin{tikzcd}
      A \arrow[r,"i"] & A[t,dt] \arrow[r,shift left,"f_0"] \arrow[r,shift right,swap,"f_1"] & A
    \end{tikzcd}
  \end{center}
  is a good path object for $A$ in the model category of (c)dgas; here $i$ denotes the natural inclusion.
\end{proof}

Now consider the case where $V$ is a complete dgla, with differential $d$ and bracket $[-,-]$.
In this case, $(V \otimes A)^0$ is a Lie algebra and the \emph{gauge group} $G$ of $V \otimes A$ is defined by exponentiating $(V \otimes A)^0$. 
That is, $G$ consists of formal symbols $\{e^x : x \in (V \otimes A)^0\}$, with multiplication $e^x e^y = e^{x * y}$ given by the Baker--Campbell--Hausdorff (BCH) formula.

\begin{defn}
  \label{def:gauge}
  The \emph{gauge action} of $G$ on $\MC(V,A)$ is defined by
  \begin{equation}
    \label{eq:gauge_dgla}
    e^x \cdot \xi =  \xi + \sum_{n=1}^{\infty} \frac{(\ad_x)^{n-1}}{n!} (\ad_x\xi - dx).
  \end{equation} 
  Two Maurer--Cartan elements $\xi, \eta \in V \otimes A$ are said to be \emph{gauge equivalent} if they lie in the same orbit of the gauge action. 
  We write $\mathscr{MC}(V,A)$ for the quotient of $\MC(V,A)$ by the gauge action.

  If $V$ is a complete dga, we say that two Maurer--Cartan elements in $V \otimes A$ are gauge equivalent if they are \emph{gauge equivalent} in the corresponding dgla, taken with the commutator bracket.
\end{defn}

\begin{rem}
  Completeness of $V$ implies that $V$ is pronilpotent; thus the infinite series in the BCH formula and the above gauge action~\eqref{eq:gauge_dgla} converge.
  Indeed, the ascending filtration on $S\Sigma^{-1}V^*$ corresponds to a descending filtration on $V$, and the Sullivan condition on the differential of $S\Sigma^{-1}V^*$ corresponds to pronilpotence of $V^*$ with respect to this filtration.
\end{rem}

More generally, for a (not necessarily complete) dga $V$, the following definition of gauge equivalence can be found in, for example, \cite{chl}.

\begin{defn}
  Let $V$ be a dga. The \emph{gauge action} of the invertible elements of $V^0$ on $\MC(V,A)$ is defined by
  \begin{equation}
    \label{eq:gauge_dga}
    a \cdot \xi =  a \xi a^{-1} - da \cdot a^{-1}.
  \end{equation}
\end{defn}

This definition is equivalent to~\cref{def:gauge} in the case where $V$ is complete.

Our aim is to establish gauge equivalence as a \emph{left} homotopy in the model category of cdgas.
In particular, left and right homotopy coincide when the domain is cofibrant and the codomain is fibrant, so this would prove that Sullivan homotopy and gauge equivalence coincide in the case of dglas. 
We will see how this interpretation extends to Maurer--Cartan elements in $L_{\infty}$-algebras.

\section{Gauge equivalence as a left homotopy of DGLAs}
\label{gauge_left_dgla}

It this section, we show that gauge equivalence for complete dglas coincides with left homotopy between complete dgla morphisms, with respect to the model category structure of~\cite{lm15}. 
An analogous result is proved in \cite{bm13,bfmt18} for a different model structure.
The key to this construction lies in an alternative characterization of Maurer--Cartan elements in $V$, when $V$ is a dgla, as follows.
Let $L(x)$ be the free complete dgla generated by one element $x$ of degree 1, with differential $dx = -\frac{1}{2}[x,x]$.
Then there is a correspondence between the set $\MC(V)$ and the set of dgla morphisms $L(x) \to V$.
Thus, to describe a left homotopy in a model category of complete dglas we require a cylinder object for $L(x)$; such a cylinder is given by the \emph{Lawrence--Sullivan interval} introduced in~\cite{ls14}.

The Lawrence--Sullivan interval $\mathfrak{L}$ is the free complete dgla on three generators $a$, $b$, $z$, where $|a|=|b|=1$, $|z|=0$, with differential 
\begin{gather*}
  da+\frac{1}{2}[a,a]=0, \quad db+\frac{1}{2}[b,b]=0,\\
  dz = \ad_z(b)+\frac{\ad_z}{e^{\ad_z}-\id}(b-a).
\end{gather*} 
That is, the differential $d$ is defined such that $a$ and $b$ are Maurer--Cartan elements in $\lie{L}$, and are gauge equivalent by $a = e^z \cdot b$. 

\begin{prop}\label{result:cyl_dgla}
  Let $i_0, i_1 \colon L(x) \to \lie{L}$ be the natural inclusions and $p \colon \lie{L} \to L(x)$ be the natural projection, that is, $i_0(x)=a$, $i_1(x)=b$ and $p(a)=p(b)=x$, $p(z)=0$. Then 
  \begin{center}
    \begin{tikzcd}
      L(x) \arrow[r,shift left,"i_0"] \arrow[r,shift right,swap,"i_1"] & \lie{L} \arrow[r,"p"] & L(x)
    \end{tikzcd}
  \end{center}
  is a good cylinder object for $L(x)$ in the category of complete dglas, equipped with a model structure in which a morphism $f \colon (V,d) \to (V',d')$ is
  \begin{enumerate}
    \item a weak equivalence if\/ $S\Sigma^{-1}(V')^* \to S\Sigma^{-1}V^*$ is a weak equivalence in the category of cdgas.
    \item a fibration if it is surjective.
  \end{enumerate}
\end{prop}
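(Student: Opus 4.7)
The plan is to verify the three defining conditions of a good cylinder object for $L(x)$ with respect to the Lazarev--Markl model structure on complete dglas: that the composition factors the codiagonal $L(x) \sqcup L(x) \to L(x)$, that $p$ is a weak equivalence, and that $i_0 \sqcup i_1$ is a cofibration. All three dglas are free on explicit generators -- $L(x)$ on one degree-$1$ Maurer--Cartan element, $L(x) \sqcup L(x) \cong L(a,b)$, and $\lie{L} = L(a,b,z)$ with the Lawrence--Sullivan differential -- so each condition reduces to an explicit check on generators or an appeal to general properties of semifree extensions.

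I would first verify that $i_0$, $i_1$ and $p$ are well-defined morphisms of complete dglas and recover the codiagonal under composition. Because the sources are free, only compatibility with the differential needs checking. For $i_0$ and $i_1$ this is immediate, since $a$ and $b$ satisfy the same Maurer--Cartan equation as $x$. For $p$, one expands $\frac{\ad_z}{e^{\ad_z} - \id}$ as a Bernoulli series and applies $p$ termwise: since $p(z) = 0$, every positive power of $\ad_z$ vanishes, reducing the verification to the constant term of the series.

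To show $p$ is a weak equivalence, I would pass to cdgas via the dualising functor $V \mapsto S\Sigma^{-1} V^*$, which by definition of the Lazarev--Markl model structure converts weak equivalences of complete dglas to cdga quasi-isomorphisms. The cdga $S\Sigma^{-1} \lie{L}^*$ has three generators (two in degree zero dual to $a, b$, one in degree one dual to $z$), and the crucial feature of the Lawrence--Sullivan differential is that the dual of $z$ has linear part equal to the dual of $b - a$. Filtering by word length, the $E_1$ page of the resulting spectral sequence identifies with $S\Sigma^{-1} L(x)^*$ tensored with an acyclic two-generator complex, yielding the quasi-isomorphism. For the cofibrancy of $i_0 \sqcup i_1$, I would observe that $L(a,b) \hookrightarrow L(a,b,z)$ is a semifree extension by a single generator with a prescribed Maurer--Cartan-type differential, and that such elementary extensions are cofibrations in the Lazarev--Markl model structure by direct verification of the left lifting property against surjective quasi-isomorphisms.

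The main obstacle will be the weak-equivalence step: because $dz$ involves an infinite Bernoulli series in $\ad_z$, the induced differential on $S\Sigma^{-1}\lie{L}^*$ is a sum of operations of unbounded word length, so the spectral sequence argument requires care to establish convergence in the pseudocompact topology. A cleaner alternative is to construct a deformation retraction of $\lie{L}$ onto $L(x)$ directly at the dgla level, using $z$ as a contracting homotopy in the spirit of~\cite{ls14}, and to transfer this retraction through the Chevalley--Eilenberg functor. Either route should succeed, with the remaining checks -- well-definedness, factorization of the codiagonal, and cofibrancy -- being largely routine.
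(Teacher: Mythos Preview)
The paper does not supply a proof of this proposition; it is stated and immediately followed by the next result, presumably because the content is essentially contained in the cited references \cite{lm15} (for the model structure) and \cite{ls14,bm13,bfmt} (for the Lawrence--Sullivan interval as a cylinder). Your outline is therefore not competing against any argument in the paper itself, and as a plan for filling the gap it is sound.

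One point you will hit immediately when checking that the composite recovers the codiagonal: as printed, $p(b)=0$ cannot be right. The fold map $L(x)\sqcup L(x)\to L(x)$ sends both copies of $x$ to $x$, so one must have $p(a)=p(b)=x$. With that correction, expanding $dz$ in the Bernoulli series gives $p(dz)=p(b-a)+(\text{terms involving }z)=x-x=0=d(p(z))$, and $p$ is a chain map; with $p(b)=0$ one gets $p(dz)=-x\neq 0$ and $p$ fails to commute with the differential. Your proposed verification would catch this, but it is worth flagging that the statement itself needs this fix before the proof can proceed.

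The remaining steps---weak equivalence of $p$ via either a word-length spectral sequence on $S\Sigma^{-1}\lie{L}^*$ or a direct deformation retraction onto $L(x)$, and cofibrancy of $L(a,b)\hookrightarrow L(a,b,z)$ as a semifree extension in the Lazarev--Markl structure---are standard, and your sketch handles them adequately. Of your two proposed routes for the weak equivalence, the deformation-retraction approach is indeed cleaner given the infinite Bernoulli tail in $dz$.
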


\begin{proof}
  See \cite{bfmt18} Corollary 5.3 and Theorem 7.6.
\end{proof}

It is then straightforward to show that gauge equivalence corresponds to the notion of a left homotopy in the category of complete dglas.

\begin{prop}%[{\cite[Proposition 3.1]{bm13}}]
  Let $V$ be a complete dgla. 
  Two Maurer--Cartan elements $\xi, \eta \in V$ are gauge equivalent if and only if there exists a dgla morphism $h \colon \lie{L} \to V$ such that $h(a)=\xi$ and $h(b) = \eta$. 
\end{prop}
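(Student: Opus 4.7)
The plan is to deduce both directions directly from the construction of $\mathfrak{L}$ as a free complete graded Lie algebra on the generators $a, b, z$ whose differential has been tailored so that $a, b$ are Maurer--Cartan and $a = e^z \cdot b$ in $\mathfrak{L}$, as recalled above.

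For the forward direction, suppose $\xi = e^x \cdot \eta$ for some $x \in V^0$. Set $h(a) = \xi$, $h(b) = \eta$, $h(z) = x$; by the universal property this extends uniquely to a morphism of complete graded Lie algebras $h \colon \mathfrak{L} \to V$. To promote $h$ to a dgla morphism it suffices to verify $h \circ d = d \circ h$ on the three generators, since the difference $h \circ d - d \circ h$ is an $h$-derivation out of a free complete graded Lie algebra and so is determined by its values on generators. On $a$ and $b$ the equation reduces to the Maurer--Cartan equations for $\xi$ and $\eta$; on $z$ it reads
\[
  \ad_x(\eta) + \frac{\ad_x}{e^{\ad_x} - \id}(\eta - \xi) \;=\; dx,
\]
which is exactly the gauge equation $\xi = e^x \cdot \eta$ rewritten by inverting the series in formula~\eqref{eq:gauge_dgla}.

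For the converse, suppose $h \colon \mathfrak{L} \to V$ is a dgla morphism with $h(a) = \xi$ and $h(b) = \eta$. The gauge action~\eqref{eq:gauge_dgla} is built entirely from the bracket and the differential, so any dgla morphism intertwines it. Applying $h$ to the identity $a = e^z \cdot b$ valid in $\mathfrak{L}$ therefore yields $\xi = e^{h(z)} \cdot \eta$, and $h(z) \in V^0$ realises the required gauge equivalence.

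The only nontrivial ingredient, already absorbed into the definition of $\mathfrak{L}$, is that the prescribed differential on $z$ is consistent (i.e.\ satisfies $d^2 = 0$) and corresponds precisely to the gauge formula; once this is taken on board from \cite{ls14}, the argument above is essentially just a bookkeeping of the universal property, and the main step to write out carefully is the rewriting of the gauge equation into the form of $dz$.
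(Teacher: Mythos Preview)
Your proof is correct and follows essentially the same approach as the paper's: both directions hinge on the fact that the differential on $z$ in $\mathfrak{L}$ encodes exactly the gauge relation $a = e^z \cdot b$, so a dgla morphism $h$ transports this relation to $\xi = e^{h(z)} \cdot \eta$, and conversely any gauge equivalence defines compatible values on the generators. Your write-up is in fact more explicit than the paper's, which simply says ``a direct computation shows $\xi = e^{h(z)} \cdot \eta$'' and ``the same computation shows that $h$ is a dgla morphism''; your observation that the gauge action is built from bracket and differential, hence preserved by dgla morphisms, is a clean way to package that computation.
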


\begin{proof}
  Consider the element $h(z) \in V$. 
  If $h$ is a dgla morphism, then $h(z)$ has degree 0, and $d(h(z)) = h(dz)$, from which a direct computation shows $\xi = e^{h(z)} \cdot \eta$.
  Conversely, if $\xi$ and $\eta$ are gauge equivalent by $\xi = e^x \cdot \eta$, then define $h(a) = \xi$, $h(b) = \eta$ and $h(z) = x$. 
  The same computation shows that $h$ is a dgla morphism.
\end{proof}

It is natural to ask how this result can be generalized to a notion of homotopy when $V$ is an $L_{\infty}$-algebra.
In~\cite{bm13a}, two Maurer--Cartan elements $\xi, \eta \in V$ are called \emph{cylinder homotopic} (terminology following~\cite{dp16}) if there exists an $L_\infty$-morphism $\mathfrak{L} \to V$ such that $h(a)=\xi$ and $h(b) = \eta$.
Then by~\cite[Proposition 4.5]{bm13a}, two Maurer--Cartan elements are cylinder homotopic if and only if they are Sullivan homotopic.
The resulting generalization, however, is no longer a left homotopy of complete dglas.

\section{Left homotopy of Maurer--Cartan elements}
\label{left_cdga}

A different approach will be taken in this section: loosely, we will work in the Koszul dual picture, and consider Maurer--Cartan elements in $L_{\infty}$-algebras and $A_{\infty}$-algebras by their representing (c)dga maps. 
We define the following notion of homotopy for Maurer--Cartan elements.

\begin{defn} \hfill
  \begin{enumerate}
    \item Let $V$ be a complete $L_{\infty}$-algebra and $A$ be a cdga. 
      Two Maurer--Cartan elements $\xi, \eta \in V \otimes A$ are \emph{left homotopic} if their representing cdga maps $S\Sigma^{-1}V^* \to A$ are left homotopic in the model category of cdgas.
    \item Let $V$ be a complete $A_{\infty}$-algebra and $A$ be a cdga. 
      Two Maurer--Cartan elements $\xi, \eta \in V \otimes A$ are \emph{left homotopic} if their representing dga maps $T\Sigma^{-1}V^* \to A$ are left homotopic in the model category of dgas.
  \end{enumerate}
\end{defn}

\subsection{The cylinder object for (c)dgas}
\label{cyl_cdga}

We recall a cylinder object for cdgas constructed in~\cite[Section 2.2]{fot08}.
Given a cofibrant cdga of the form $(SV, d)$, its \emph{cylinder} $C(SV)$ is defined to be the cdga $(S(V \oplus \bar{V} \oplus \widehat{V}), D)$, where $\bar{V} \cong \Sigma V$ and $\widehat{V} \cong V$, and the differential $D$ is defined by
\[
  D(v) = dv,\ D(\bar{v}) = \widehat{v},\ D(\widehat{v}) = 0.
\]
We also define a degree $-1$ derivation $s$ on $C(SV)$ by 
\[
  s(v) = \bar{v},\ s(\bar{v}) = s(\widehat{v}) = 0.
\]
Then $\theta \coloneqq [s,D] = sD + Ds$ is a derivation of degree 0, so $e^{\theta} = \sum_{n=0}^{\infty} \theta^n / n!$ is an automorphism of $C(SV)$. 
Explicitly, 
\[
  \theta(v) = sdv + \widehat{v},\ \theta(\bar{v}) = \theta(\widehat{v}) = 0,
\]
and inductively, $\theta^n (v) = (sD)^n (v)$ for $n \ge 2$ as $s^2 = 0$. Since $SV$ is a Sullivan cdga, $\theta^N (v) = 0$ for some $N$, and hence we have a convergent series 
\begin{equation}
  \label{eq:exptheta}
  e^{\theta} (v) = v + \widehat{v} + \sum_{n=1}^{\infty} \frac{(sD)^n(v)}{n!},\ 
  e^{\theta} (\bar{v}) = \bar{v},\ 
  e^{\theta} (\widehat{v}) = \widehat{v}.
\end{equation}
% Note: (Ds)^2 is 0 on V + \bar{V} + \widehat{V}, but this does not mean $(Ds)^2 = 0$ as the composition of two derivations is generally neither a derivation nor an algebra map.

Analogously, given a cofibrant dga of the form $(TV, d)$, its \emph{cylinder} $C(TV)$ is defined to be the dga $(T(V \oplus \bar{V} \oplus \widehat{V}), D)$, with $D$ and $e^{\theta}$ defined as above.
This is a different cylinder to the one constructed by~\cite{bl77}.

\begin{prop}\label{result:cyl_cdga}
  Let $(SV,d)$ be a cofibrant cdga. Let $i \colon SV \to C(SV)$ be the natural inclusion and $p \colon C(SV) \to SV$ be the natural projection, that is, $i(v)=v$ and $p(v)=v$, $p(\bar{v})=p(\widehat{v})=0$. Then 
  \begin{center}
    \begin{tikzcd}
      SV \arrow[r,shift left,"i"] \arrow[r,shift right,swap,"e^{\theta} \circ\, i"] & C(SV) \arrow[r,"p"] & SV
    \end{tikzcd}
  \end{center}
  is a good cylinder object for $(SV,d)$ in the model category of cdgas.
  Analogously, if $(TV,d)$ is a cofibrant dga, then
  \begin{center}
    \begin{tikzcd}
      TV \arrow[r,shift left,"i"] \arrow[r,shift right,swap,"e^{\theta} \circ\, i"] & C(TV) = (T(V \oplus \bar{V} \oplus \widehat{V}), D) \arrow[r,"p"] & TV
    \end{tikzcd}
  \end{center}
  is a good cylinder object for $(TV,d)$ in the model category of dgas.
\end{prop}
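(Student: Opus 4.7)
The plan is to verify, in order, that (i) $i$, $e^{\theta} \circ i$, and $p$ are cdga maps; (ii) $p \circ i = p \circ e^{\theta} \circ i = \id_{SV}$; (iii) $p$ is a quasi-isomorphism; and (iv) the induced map $i \sqcup (e^{\theta} \circ i)\colon SV \sqcup SV \to C(SV)$ is a cofibration. The dga case will then be entirely analogous, with $S$ replaced by $T$ and graded commutativity dropped throughout.

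Step (i) is largely formal: $i$ is a cdga map by the definition of $D$, and $p$ is a cdga map because $\ker p$, the ideal generated by $\bar V \oplus \widehat V$, is $D$-stable. For $e^{\theta} \circ i$ it suffices to see that $e^{\theta}$ commutes with $D$, which follows from the identity $[\theta, D] = 0$; this is immediate from $\theta = sD + Ds$ and $D^2 = 0$. Step (ii) reduces to observing that $e^{\theta}(v) - v - \widehat v = \sum_{n \geq 1}(sD)^n(v)/n!$ lies in $\ker p$, since each $(sD)^n(v)$ with $n \geq 1$ contains a factor from $\bar V$ (the operator $s$ introduces a $\bar V$-factor, which is preserved modulo $\ker p$ under subsequent applications of $D$ and $s$).

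For step (iii), I would identify $C(SV) \cong (SV, d) \otimes (S(\bar V \oplus \widehat V), D_0)$ as cdgas, where $D_0 \bar v = \widehat v$ and $D_0 \widehat v = 0$, so that $p$ becomes projection onto the first tensor factor. The complex $(\bar V \oplus \widehat V, D_0)$ is contractible via the homotopy $\widehat v \mapsto \bar v$, so $(S(\bar V \oplus \widehat V), D_0)$ is quasi-isomorphic to $k$, and hence $p$ is a quasi-isomorphism.

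The main obstacle is step (iv). The strategy is to perform a change of generators in $C(SV)$: set $v'' := e^{\theta}(v)$ and show that $\{v, v'', \bar v\}$ is a set of free generators for $C(SV)$ as a graded commutative algebra. Using the Sullivan filtration $V = \bigcup_n V(n)$ with $dV(n) \subseteq SV(n-1)$, I would argue by induction on $n$ that for $v \in V(n)$ the element $\widehat v = v'' - v - \sum_{k \geq 1}(sD)^k(v)/k!$ is expressible as a polynomial in $\{v_j, v''_j, \bar v_j : v_j \in V(n)\}$. The key point is that $dv \in SV(n-1)$ forces each $(sD)^k(v)$ with $k \geq 1$ to be a polynomial only in variables indexed by $V(n-1)$, so the inductive hypothesis applies to the $\widehat v_j$'s appearing in the formula. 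A graded-dimension count then verifies freeness, exhibiting $C(SV) = S(V \oplus V'' \oplus \bar V)$, and the same computation shows $D\bar v \in S(V \oplus V'' \oplus \bar V(n-1))$ for $v \in V(n)$. Therefore $S(V \oplus V'') \hookrightarrow C(SV)$ is a relative Sullivan extension, hence a cofibration in the Hinich model structure; precomposing with the canonical isomorphism $SV \sqcup SV = S(V \oplus V') \xrightarrow{\sim} S(V \oplus V'')$ given by $v' \mapsto v''$ completes the argument.
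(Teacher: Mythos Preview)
The paper does not prove this proposition; it is stated as a recalled fact from \cite[Section~2.2]{fot08}, so there is no proof in the paper to compare against. Your direct verification is essentially the standard argument and is correct in outline.

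One small point deserves tightening. In step~(iv) you appeal to a ``graded-dimension count'' to pass from surjectivity to freeness of the generating set $\{v,\,v'',\,\bar v\}$. This is not quite sufficient when the graded pieces of $V$ are infinite-dimensional. The clean fix is already implicit in your induction: prove directly that the algebra map
\[
  \phi_n \colon S\bigl(V(n)\oplus V'(n)\oplus \bar V(n)\bigr)\longrightarrow S\bigl(V(n)\oplus \bar V(n)\oplus \widehat V(n)\bigr)
\]
is an isomorphism for every $n$. The base case $n=0$ is a linear change of generators ($v'\mapsto v+\widehat v$), and for the inductive step both sides are free extensions of the stage-$(n{-}1)$ algebra by the same graded vector space, with $\phi_n$ inducing on the new generators the linear isomorphism $v\mapsto v$, $\bar v\mapsto \bar v$, $v'\mapsto v+\widehat v$ modulo $S(W(n{-}1))$. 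This yields the isomorphism without any dimension hypothesis and simultaneously exhibits the relative Sullivan filtration on $\bar V$ that you need for the cofibration claim. Everything else---including the reduction $p\circ s = 0$ in step~(ii), the tensor decomposition in step~(iii), and the passage to the dga case---is fine as written.
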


Since $C(SV)$ and $C(TV)$ are good cylinder objects, $\xi$ and $\eta$ are left homotopic if and only if there exists a cdga morphism $H \colon C(SV) \to k$ or dga morphism $H \colon C(TV) \to k$ that is a left homotopy between their representing (c)dga maps.

\subsection{Left homotopy in (c)dgas}

From now on, let $V$ be a complete $L_{\infty}$-algebra; everything we say will have an obvious analogue for complete $A_{\infty}$-algebras.
Consider the vector space $V \oplus \bar{V} \oplus \widehat{V}$, where $\bar{V} \cong \Sigma^{-1} V$ and $\widehat{V} \cong V$. 
This is a complete $L_{\infty}$-algebra with differential
\[
  d(\xi) = d\xi,\ d(\bar{\xi}) = 0,\ d(\widehat{\xi}\,) = \bar{\xi},
\]
and all brackets defined as 0 on the second and third components. 
Then the representing cdga of $V \oplus \bar{V} \oplus \widehat{V}$ is isomorphic to $C(SU)$, where $U = \Sigma^{-1}V^*$, $\bar{U} = \Sigma^{-1}\bar{V}^* \cong \Sigma U$ and $\widehat{U} \cong U$, with differential $D$ as in \cref{cyl_cdga}.

Note that for any cdga $A$, an element in $(V \oplus \bar{V} \oplus \widehat{V}) \otimes A$ is Maurer--Cartan if and only if it is of the form $\xi + x + 0$ for some $\xi \in \MC(V \otimes A)$ and $x \in (V \otimes A)^0$.
By abuse of notation, we denote also by $\xi$ its representing cdga map $(SU,d) \to A$, and by $x$ its equivalent degree 0 linear map $\bar{U} \to A$.
Hence $x$ and $\xi$ together determine a cdga map 
$H_{\xi,x} \colon C(SU) \to A$
that is a left homotopy between $\xi$ and $x * \xi \coloneqq H_{\xi,x} \circ e^{\theta} \circ i$, by
\[
  H_{\xi,x}(u) = \xi(u),\  H_{\xi,x}(\bar{u}) = x(\bar{u}),\ H_{\xi,x}(\widehat{u}) = 0. 
\]

Our next result gives a compact formula for left homotopy of Maurer--Cartan elements. In the next section, we will show that the formula specialises to gauge equivalence in the case where $V$ is a dg(l)a.

\begin{thm} \hfill
  \label{result:gauge_short_a}
  \begin{enumerate}
    \item Let $V$ be a complete $L_{\infty}$-algebra. Two Maurer--Cartan elements $\xi, \eta \in V$ are left homotopic if and only if their representing cdga maps $\xi, \eta \colon S\Sigma^{-1}V^* \to k$ satisfy
      \[
        \eta = \xi \circ e^{[\tilde{x},d]}, 
      \]
      where $\tilde{x}$ is the constant degree $-1$ derivation of $S\Sigma^{-1}V^*$ induced by the left homotopy.

    \item Let $V$ be a complete $A_{\infty}$-algebra. Two Maurer--Cartan elements $\xi, \eta \in V$ are left homotopic if and only if their representing dga maps $\xi, \eta \colon T\Sigma^{-1}V^* \to k$ satisfy
      \[
        \eta = \xi \circ e^{[\tilde{x},d]}, 
      \]
      where $\tilde{x}$ is the constant degree $-1$ derivation of $T\Sigma^{-1}V^*$ induced by the left homotopy.
  \end{enumerate}
\end{thm}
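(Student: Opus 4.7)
The plan is to use the explicit good cylinder $C(SU) = (S(U \oplus \bar{U} \oplus \widehat{U}), D)$ of \cref{result:cyl_cdga} (with $U = \Sigma^{-1}V^*$) and reduce the entire statement to one commutation identity of derivations.

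Given any cdga map $H \colon C(SU) \to k$ with $H \circ i = \xi$, the condition $H \circ D = 0$ forces $H(\widehat{u}) = 0$, and the remaining freedom is the degree-$0$ map $H|_{\bar{U}} \colon \bar{U} \to k$, which corresponds to a degree-$0$ element $x \in V$. Thus every left homotopy out of $\xi$ has the form $H_{\xi,x}$ described just before the theorem. So the theorem reduces to showing that
\[
  H_{\xi,x} \circ e^{\theta} \circ i \;=\; \xi \circ e^{[\tilde{x},d]},
\]
with $\tilde{x}$ the constant degree-$-1$ derivation of $SU$ extending $x$ viewed as a linear map $U \to k$.

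The key step is to factor $H_{\xi,x}$ as $\xi \circ \nu$, where $\nu \colon C(SU) \to SU$ is the cdga map defined on generators by $\nu(u) = u$, $\nu(\bar{u}) = x(\bar{u}) \in k \subset SU$, and $\nu(\widehat{u}) = 0$. Compatibility with the differentials is immediate, since $x(\bar{u}) \in k$ is killed by $d$, and the identity $H_{\xi,x} = \xi \circ \nu$ holds by construction. Tracing through the derivation property also gives $\nu \circ s|_{SU} = \tilde{x}$ as degree-$-1$ derivations of $SU$. With these in hand, everything boils down to the identity
\[
  \nu \circ \theta \;=\; [\tilde{x},d] \circ \nu \qquad \text{as maps } C(SU) \to SU.
\]
Both sides are degree-$0$ $\nu$-twisted derivations $C(SU) \to SU$, so equality reduces to checking on the generators $u, \bar{u}, \widehat{u}$; on each the verification is a single line, using $\nu \circ s|_{SU} = \tilde{x}$, $\tilde{x}(u) \in k$, and $d|_{k} = 0$.

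Iterating the commutation gives $\nu \circ \theta^n = [\tilde{x},d]^n \circ \nu$, hence $\nu \circ e^{\theta} = e^{[\tilde{x},d]} \circ \nu$. Since $\nu|_{SU} = \mathrm{id}$, restricting to $SU \subset C(SU)$ yields $\nu \circ e^{\theta} \circ i = e^{[\tilde{x},d]}$, and composing with $\xi$ gives the desired formula, proving the ``only if'' direction. The ``if'' direction is identical: a constant degree-$-1$ derivation $\tilde{x}$ determines an element $x \in V^0$ via its restriction to $U$, and the resulting $H_{\xi,x}$ is a left homotopy by the same computation. The $A_{\infty}$ case is handled verbatim with $T$ in place of $S$. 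The main obstacle I anticipate is the bookkeeping around signs and degree shifts when verifying $\nu \circ s|_{SU} = \tilde{x}$ and the commutation identity on generators; once those are carefully checked the argument is purely formal.
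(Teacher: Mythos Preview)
Your argument is correct, and in fact simpler than the one in the paper. Both proofs begin with the same factorisation step: your $\nu$ is exactly the map $H_{\id,x}$ appearing in the paper, and both aim to show that $H_{\id,x}\circ e^{\theta}\circ i = e^{[\tilde{x},d]}$. The paper does this by passing to a Sullivan homotopy $F+G\,dz$ and invoking the Block--Lazarev integral formula $f=\exp\bigl[\int_0^1 GF^{-1}\,dz,\,d\bigr]$, then checking that $G=\tilde{x}F$ so the integral collapses to $\tilde{x}$. You instead prove the commutation $\nu\circ\theta=[\tilde{x},d]\circ\nu$ directly on generators and iterate; this avoids the detour through Sullivan homotopies and the external reference entirely. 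A further dividend is that your argument genuinely works verbatim in the $A_\infty$ case, since nowhere does it use graded commutativity---whereas the integral formula does, which is why the paper has to reduce the $A_\infty$ case to the $L_\infty$ one by factoring through the abelianisation $p\colon TU\to SU$. The paper's route has the advantage of making the link to Sullivan homotopy explicit, but yours is the cleaner proof of the theorem as stated.
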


\begin{proof}
  Consider first the $L_{\infty}$-case.
  We lift the homotopy $H_{\xi,x}$ between $\xi$ and $x * \xi$ to $SU$ in the following sense: 
  Let $f$ be the cdga map $H_{\id,x} \circ e^{\theta} \circ i \colon SU \to SU$. Then $x * \xi = \xi \circ f$ and the identity map of $SU$ is left homotopic to $f$ via $H_{\id, x} \colon C(SU) \to SU$, defined by 
  \[
    H_{\id,x}(u) = u,\  
    H_{\id,x}(\bar{u}) = x(\bar{u}),\ 
    H_{\id,x}(\widehat{u}) = 0.
  \]
  %This is indeed a cdga map as $\id + x$ is Maurer--Cartan.

  We now show that $f = e^{[\tilde{x},d]}$, where $\tilde{x}$ is the constant derivation of $SU$ corresponding to $x$. 
  First we convert the homotopy $H_{\id,x}$ into a Sullivan homotopy between the identity morphism of $SU$ and $f$.  
  %Using the explicit homotopy from $\id$ to an automorphism of the form $e^{[s,D]}$, constructed in~\cite[Theorem 3.4]{bl05}, the map 
  %$e^{z\theta} + se^{z\theta} dz \colon C(SU) \to C(SU)[z,dz]$ 
  %defines a Sullivan homotopy between the automorphisms $\id$ and $e^{[s,D]}$ of $C(SU)$. 
  Consider the map
  \begin{equation}\label{eq:blhomotopy}
    e^{z\theta} + se^{z\theta} dz \colon C(SU) \to C(SU)[z,dz],
  \end{equation}
  which is well-defined as any element $u+\bar{u}+\tilde{u} \in C(SU)$ satisfies $\theta^N(u+\bar{u}+\tilde{u}) = 0$ for sufficiently large $N$ (see \cref{cyl_cdga}), so $e^{z\theta}$ is indeed a polynomial in $z$.
  By \cite[Theorem 3.4]{bl05}, equation~\eqref{eq:blhomotopy} defines a Sullivan homotopy between the identity morphism and the automorphism $e^{\theta} = e^{[s,D]}$ of $C(SU)$.
  Then defining $F, G \colon SU \to SU[z]$ to be the compositions
  \[
    F = H_{\id,x} \circ e^{z\theta} \circ i,\ G = H_{\id,x} \circ se^{z\theta} \circ i,
  \]
  we obtain that $F+Gdz \colon SU \to SU[z,dz]$ is a Sullivan homotopy from $\id$ to $f$.
  Since the constant term of $F$ is always the identity on $SU$, the map $F$ is formally invertible and the integral formula from~\cite{bl05} gives 
  \[
    f = \exp\left[\,\int_0^1 GF^{-1}\,dz, \,d \,\right].
  \]
  Finally we show that $G=\tilde{x}F$, from which it follows immediately that the integral converges and evaluates to $\tilde{x}$, concluding the proof of the theorem.
  Indeed, $H_{\id,x}s$ and $\tilde{x}H_{\id,x}$ are both $H_{\id,x}$-derivations $S(U \oplus \bar{U} \oplus \widehat{U}) \to SU$, and they agree on $U \oplus \bar{U} \oplus \widehat{U}$:
  \begin{align*}
    &H_{\id,x}s(u) = H_{\id,x}(\bar{u}) =  x(\bar{u}),\ 
    H_{\id,x}s(\bar{u}) = H_{\id,x}(\widehat{u}) = 0,\ 
    H_{\id,x}s(\widehat{u}) = 0,\\
    \shortintertext{and}
    &\tilde{x}H_{\id,x}(u) = \tilde{x}(u) = x(\bar{u}),\ 
    \tilde{x}H_{\id,x}(\bar{u}) = \tilde{x}x(\bar{u}) = 0,\ 
    \tilde{x}H_{\id,x}(\widehat{u}) = 0. 
  \end{align*}
  Hence $H_{\id,x}s=\tilde{x}H_{\id,x}$, which gives $G=\tilde{x}F$ as required.

  Now suppose $\xi$ is a Maurer--Cartan element in an $A_{\infty}$-algebra. In the $A_{\infty}$-case, the integral formula no longer applies due to the lack of graded commutativity. 
  However, we can reduce to the $L_{\infty}$-case as follows.
  Since $k$ is commutative, its representing dga map $\xi \colon TU \to k$ factors as $\xi=\xi' \circ p$, where $p \colon TU \to SU$ is the canonical projection and $\xi' \colon SU \to k$ is a cdga map. 
  Similarly there are factorizations $x * \xi = (x * \xi)' \circ p$ and $H = H' \circ p$. 
  Then $\xi'$ and $(x * \xi)'$ are Maurer--Cartan elements in the $L_{\infty}$-algebra represented by $SU$, and the cdga map $H'$ defines a left homotopy between them. 
  By definition $(x * \xi)' = x * \xi'$, hence by the $L_{\infty}$-case,
  \[
    x * \xi = (x * \xi') \circ p = \xi' \circ e^{[\tilde{x},d_{SU}]} \circ p
    = \xi' \circ p \circ e^{[\tilde{x},d_{TU}]}
    = \xi \circ e^{[\tilde{x},d_{TU}]}. 
  \]
  This proves the $A_{\infty}$-case.
\end{proof}

We would like to extend~\cref{result:gauge_short_a} to Maurer--Cartan elements in $L_{\infty}$-algebras and $A_{\infty}$-algebras of the form $V \otimes A$, that are not necessarily complete.
However, given two left-homotopic Maurer--Cartan elements $\xi, \eta \in V \otimes A$, it is \emph{not} true that their representing cdga maps $\xi, \eta \colon S\Sigma^{-1} V^* \to A$ satisfy $\eta = \xi \circ e^{[\tilde{x},d]}$ for some degree $-1$ derivation $\tilde{x}$ of $S\Sigma^{-1}V^*$, as the following counterexample shows.

\begin{exmp}
  Take $V$ to be a dg vector space, so that it has a linear differential and a decomposition $V = H(V) \oplus \Sigma B \oplus B$, and take $A=S\Sigma^{-1}V^*$. 
  Let $\xi$ be the identity map, and $\eta$ be the map induced by the projection of $V$ onto its homology $H(V)$.
  Then $\xi$ and $\eta$ are left homotopic, but $\eta$ is not an automorphism, so cannot be of the form $\id \circ e^{[\tilde{x},d]}$.
  Indeed, if $\tilde{x}$ is the constant derivation corresponding to the homotopy, then the exponential $e^{[\tilde{x},d]}$ diverges.
\end{exmp}

To obtain an analogue of~\cref{result:gauge_short_a} for Maurer--Cartan elements in $V \otimes A$ requires introducing a \emph{semi-completed symmetric algebra} and a \emph{semi-completed tensor algebra}: for a pseudocompact vector space $V$, define
\[
  S'V = \bigoplus_{i \ge 0} S^i{V} \quad \text{and} \quad
  T'V = \bigoplus_{i \ge 0} T^i{V}.
\]
Since $V$ is pseudocompact, $S^i(V)$ and $T^i(V)$ are still assumed to mean completed tensor powers. However, $S'V$ and $T'V$ differ from $\widehat{S}V$ and $\widehat{T}V$ by taking the direct sum of tensor powers instead of the direct product. 
Thus $S'V$ and $T'V$ are not pseudocompact, but do have some non-discrete topology.
They have the following property.

\begin{lem}
  \label{result:universal}
  Let $V$ be a pseudocompact vector space. 
  \begin{enumerate}
    \item Let $B$ be a pseudocompact \textup(c\textup)dga. 
      Any continuous linear map $V \to B$ extends uniquely to a continuous cdga map $S'V \to B$ or a continuous dga map $T'V \to B$.
    \item Any continuous linear map $V \to S'V$ extends uniquely to a continuous derivation of $S'V$, and any continuous linear map $V \to T'V$ extends uniquely to a continuous derivation of $T'V$.
  \end{enumerate}
\end{lem}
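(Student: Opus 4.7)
The plan is to establish both parts by combining the standard universal property of the free (commutative) algebra with a careful continuity check, exploiting the fact that $S'V = \bigoplus_n S^n V$ and $T'V = \bigoplus_n T^n V$ inherit the colimit topology from their decomposition into pseudocompact homogeneous pieces. With this topology, a map out of $S'V$ (resp.\ $T'V$) is continuous if and only if its restriction to every $S^n V$ (resp.\ $T^n V$) is continuous, which turns both statements into finite-degree continuity verifications.

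For part (1), given a continuous $\phi \colon V \to B$, the purely algebraic universal property of the free (commutative) algebra produces a unique algebra map $\tilde\phi$ extending $\phi$; the differential and (co)commutativity compatibilities are automatic. To check continuity, I would show that the restriction $\tilde\phi|_{S^n V}$ is the unique continuous extension to the completed symmetric power of the $n$-linear continuous map $(v_1,\dots,v_n) \mapsto \phi(v_1) \cdots \phi(v_n)$, where this extension exists because multiplication in the pseudocompact algebra $B$ is continuous on completed tensor products. For part (2), given continuous $f \colon V \to S'V$, I would impose the Leibniz formula
\[
  D(v_1 \cdots v_n) = \sum_{i=1}^n (-1)^{\epsilon_i}\, v_1 \cdots f(v_i) \cdots v_n
\]
on each homogeneous summand, obtaining a unique algebraic derivation. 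Continuity of $D|_{S^n V}$ again follows from continuity of $f$, continuity of the (internal) multiplication in $S'V$, and the standard fact that a multilinear continuous map on pseudocompact factors extends continuously to the completed tensor power. Uniqueness in both parts is automatic, since $V$ generates $S'V$ (resp.\ $T'V$) as an algebra and two continuous algebra maps or derivations agreeing on a generating set must coincide. The $T'V$ arguments are the same, with symmetric products replaced by ordered ones and no identification of invariants and coinvariants.

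The main obstacle is bookkeeping the topologies, as $V$ and $B$ are pseudocompact but $S'V$ is neither pseudocompact nor discrete. The crucial point to pin down is that the multiplication maps $S^a V \otimes S^b V \to S^{a+b} V$ and $T^a V \otimes T^b V \to T^{a+b} V$ are continuous on the completed tensor products, so that the algebra extension in (1) and the derivation in (2) restrict to continuous maps on each homogeneous piece; once this is verified, the colimit description reassembles these pieces into a single continuous map on all of $S'V$ (resp.\ $T'V$).
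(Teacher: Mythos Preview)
Your proposal is correct and follows essentially the same strategy as the paper: reduce to each homogeneous piece $V^{\widehat\otimes n}$ separately and extend the given linear map continuously there, using either the multiplication of $B$ (part~1) or the Leibniz rule (part~2). The only difference is presentational: the paper makes the continuity check explicit by writing $V = \invlim_i V_i$ and first defining the extension on the finite-dimensional pieces $V_{i_1}\otimes\cdots\otimes V_{i_n}$ before passing to the inverse limit, whereas you invoke the equivalent abstract fact that a continuous multilinear map on pseudocompact factors extends to the completed tensor product.
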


\begin{proof} \hfill
  \begin{enumerate}
    \item Since elements of $S'V$ and $T'V$ are finite sums of tensor powers, it suffices to prove that a linear map $f \colon V = \invlim_i V_i \to B$ extends to continuous maps $V^{\widehat{\otimes}n} \to B$ for all $n \ge 2$. Since $f$ is determined by $V_i \to B$, we define $f^{\otimes n} \colon V_{i_1} \otimes V_{i_2} \otimes \dots \otimes V_{i_n} \to B$, and take the inverse limit to obtain a map on $V^{\widehat{\otimes}n}$.
    \item This follows the same argument as above, but instead we extend $f$ to $V_{i_1} \otimes V_{i_2} \otimes \dots \otimes V_{i_n}$ by $\sum_{j=0}^{n-1} 1^j \otimes f \otimes 1^{n-1-j}$. \qedhere
  \end{enumerate}
\end{proof}

This allows us to give an alternative characterization of Maurer--Cartan elements as continuous (c)dga maps.

\begin{lem}
  \label{result:mccts}
  Let\/ $V$ be a finite-dimensional complete $L_{\infty}$-algebra, and let $A$ be a cdga. 
  There is a correspondence 
  \[
    \MC(V \otimes A) \cong \Hom (S'\Sigma^{-1}(V \otimes A)^*,k).
  \]
  Let\/ $V$ be a finite-dimensional complete $A_{\infty}$-algebra.
  There is a correspondence 
  \[
    \MC(V \otimes A) \cong \Hom (T'\Sigma^{-1}(V \otimes A)^*,k).
  \]
\end{lem}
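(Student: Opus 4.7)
The plan is to combine \cref{result:universal} with the dualisation of the Maurer--Cartan equation, matching a continuous \textup{(}c\textup{)}dga morphism $S'\Sigma^{-1}(V\otimes A)^* \to k$ (respectively $T'\Sigma^{-1}(V\otimes A)^* \to k$) with its restriction to the generators.

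First I would establish the bijection on linear data. Since $V$ is finite-dimensional, $(V \otimes A)^*$ is pseudocompact and canonically isomorphic to $V^* \otimes A^*$, and a straightforward suspension/degree bookkeeping shows that a degree $1$ element $\xi \in V \otimes A$ is the same as a continuous degree $0$ linear map $\phi_\xi \colon \Sigma^{-1}(V\otimes A)^* \to k$. By \cref{result:universal}(1), $\phi_\xi$ extends uniquely to a continuous graded-algebra morphism $\widetilde{\phi}_\xi \colon S'\Sigma^{-1}(V\otimes A)^* \to k$, and every such morphism arises this way; this sets up the underlying bijection of $1$-cochains with continuous graded-algebra maps. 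The $A_\infty$ case is identical, with $T'$ in place of $S'$.

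It remains to see that $\widetilde{\phi}_\xi$ respects the differentials exactly when $\xi$ is Maurer--Cartan. The differential on $S'\Sigma^{-1}(V\otimes A)^*$ is, by \cref{result:universal}(2), the unique continuous derivation extending its restriction to the generator space $\Sigma^{-1}(V\otimes A)^*$, and that restriction is the sum of $(\id\otimes d_A)^*$ together with the duals of the $A$-linearly extended brackets $\tfrac{1}{i!}\ell_i^A$ of $V$. Since $k$ carries the zero differential, $\widetilde{\phi}_\xi$ is a \textup{(}c\textup{)}dga morphism iff $\widetilde{\phi}_\xi\circ d$ vanishes on $\Sigma^{-1}(V\otimes A)^*$. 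Evaluating $\widetilde{\phi}_\xi$ on $i$-fold products in $S'$ produces the $i$-fold brackets of $\xi$, so term by term this vanishing is exactly
\[
  (\id \otimes d_A)(\xi) + \sum_{i \geq 1} \tfrac{1}{i!}\,[\xi,\dots,\xi]^A_i = 0,
\]
which is the Maurer--Cartan equation. The $A_\infty$ case follows by the same argument with $T'$ replacing $S'$ and omitting the factorials.

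The step I expect to be most delicate is reconciling the ``semi-completed'' structure of $S'$ (and $T'$) with the potentially infinite sums arising in the derivation extending the $L_\infty$ operations: I need to check that $\widetilde{\phi}_\xi$ is genuinely continuous and that the pairing yielding the Maurer--Cartan sum is well defined. Finite-dimensionality of $V$ is crucial here, since it ensures that for each generator only finitely many tensor powers contribute nontrivially under $\widetilde{\phi}_\xi$, matching the convergent Maurer--Cartan sum in $V \otimes A$ term by term.
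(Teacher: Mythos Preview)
Your approach matches the paper's: establish the bijection between degree~$1$ elements of $V\otimes A$ and continuous degree~$0$ linear maps on $\Sigma^{-1}(V\otimes A)^*$ via double duality, extend uniquely to continuous algebra maps using \cref{result:universal}, and then check that compatibility with the differential is precisely the Maurer--Cartan equation. The paper is terser on this last point---it simply invokes ``the same argument as the usual (non-continuous) cdga case''---and it phrases the role of finite-dimensionality slightly differently (it is used to ensure $V\otimes A$ is discrete so that its dual is pseudocompact and $S'$ is defined, rather than for the convergence reason you emphasize), but your argument is the same in substance.
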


We recover the usual representing (c)dga maps if $V$ is finite-dimensional and $A=k$.

\begin{proof}
  First note that the object $S'\Sigma^{-1}(V \otimes A)^*$ makes sense: $V$ is finite-dimensional, so $V \otimes A$ is discrete and its dual is pseudocompact. 
  Recall that $V \otimes A \cong ((V \otimes A)^*)^*$. So there is a correspondence between $(\Sigma(V \otimes A))^0$ and continuous degree 0 linear maps $\Sigma^{-1}(V \otimes A)^* \to k$, which correspond to continuous degree 0 algebra maps $S'\Sigma^{-1}(V \otimes A)^* \to k$ by \cref{result:universal}.
  The Maurer--Cartan condition corresponds to the correct axioms for differentials on $S'\Sigma^{-1}(V \otimes A)^*$ by the same argument as the usual (non-continuous) cdga case.
\end{proof}

\begin{thm} \hfill
  \label{result:gauge_short_b}
  \begin{enumerate}
    \item Let\/ $V$ be a finite-dimensional complete $L_{\infty}$-algebra and $A$ be a cdga. 
      Two Maurer--Cartan elements\/ $\xi, \eta \in V \otimes A$ are left homotopic if and only if their representing continuous cdga maps\/ $\xi', \eta' \colon S'\Sigma^{-1}(V \otimes A)^* \to k$ satisfy
      \[
        \eta' = \xi' \circ e^{[\tilde{x}',d']},
      \]
      where $\tilde{x}'$ is the constant degree $-1$ derivation of\/ $S'\Sigma^{-1}(V\otimes A)^*$ induced by the left homotopy.

    \item Let\/ $V$ be a finite-dimensional complete $A_{\infty}$-algebra and $A$ be a cdga. 
      Two Maurer--Cartan elements\/ $\xi, \eta \in V \otimes A$ are left homotopic if and only if their representing continuous dga maps\/ $\xi', \eta' \colon T'\Sigma^{-1}(V\otimes A)^* \to k$ satisfy
      \[
        \eta' = \xi' \circ e^{[\tilde{x}',d']},
      \]
      where $\tilde{x}'$ is the constant degree $-1$ derivation of\/ $T'\Sigma^{-1}(V\otimes A)^*$ induced by the left homotopy.
  \end{enumerate}
\end{thm}

\begin{proof}
  We prove the $L_{\infty}$-case; the $A_{\infty}$-case can be reduced to the $L_\infty$-case as before.
  For $U = \Sigma^{-1}(V \otimes A)^*$, consider 
  \[
    C(S'U) \coloneqq (S'(U \oplus \bar{U} \oplus \widehat{U}), D),
  \]
  where $\bar{U} \cong \Sigma U$ and $\widehat{U} \cong U$, and $D$ are defined as in $C(SU)$. 
  As before, we can also define an automorphism $e^{\theta} = \sum_{n=0}^{\infty} \theta^n / n!$ of $C(S'U)$; note that the Sullivan condition still holds on the differential $D$ of $S'U$, so the series still converges.

  While $C(S'U)$ is not a cylinder object, we can treat it as if it were one: 
  by \cref{result:universal} and \cref{result:mccts}, $\xi$ and $\eta$ are left homotopic if and only if there is a cdga map $H'$ such that the diagram commutes:
  \begin{center}
    \begin{tikzcd}
      S'U \arrow[d,shift left,"e^{\theta} \circ\, i"] \arrow[d,shift right,swap,"i"]
      \arrow[r,shift left,"\xi'"] \arrow[r,shift right,swap,"\eta'"] & k \\
      C(S'U) \arrow[ur,swap,dashed,bend right,"H'"] & 
    \end{tikzcd}
  \end{center} 
  The rest of the proof is the same as \cref{result:gauge_short_a}, replacing $C(SU)$ with $C(S'U)$ everywhere. 
  The Sullivan condition still holds on the differential $d'$ of $S'U$, so the exponential $e^{[\tilde{x}',d']}$ converges as before. 
\end{proof}

\section{Left homotopy and gauge equivalence}
\label{gauge_formulae}

In this section, \cref{result:gauge_short_a} and \cref{result:gauge_short_b} are used to obtain combinatorial formulae for the Maurer--Cartan element $x * \xi = H_{\xi,x} \circ e^{\theta} \circ i$, in terms of rooted trees. 
The formulae will show that \cref{result:gauge_short_a} and \cref{result:gauge_short_b} specialise to gauge equivalence in the case of a dgla or dga.
We will use this to deduce the following theorem at the end of the section.

\begin{thm}
  \label{thesame}
  Let\/ $V$ be a complete dgla and\/ $A$ be a cdga. For any two Maurer--Cartan elements\/ $\xi$ and $\eta$ in $V \otimes A$, the following are equivalent:
  \begin{enumerate}
    \item\label{gaugeeq} $\xi$ and $\eta$ are gauge equivalent; 
    \item\label{lefthtp} $\xi$ and $\eta$ are left homotopic;
    \item\label{sullivanhtp} $\xi$ and $\eta$ are Sullivan homotopic.
  \end{enumerate}
\end{thm}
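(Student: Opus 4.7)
The plan is to prove (2)$\Leftrightarrow$(3) from abstract model-category theory and (1)$\Leftrightarrow$(2) by comparing two differential equations derived from the short formula of the preceding section.

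\emph{Step 1: (2)$\Leftrightarrow$(3).} The representing cdga map of a Maurer--Cartan element in $V \otimes A$ is a morphism $S\Sigma^{-1}V^* \to A$ whose domain is cofibrant (by completeness of $V$) and whose codomain is fibrant (as every cdga is). Consequently, left and right homotopy between representing maps coincide by the standard model-category fact, and the proposition in Section~\ref{prelim} identifies right homotopy with Sullivan homotopy.

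\emph{Step 2: (1)$\Leftrightarrow$(2).} I would invoke the short formula from \cref{result:gauge_short_b}: a left homotopy is equivalent to the identity $\eta = \xi \circ e^{[\tilde{x}, d]}$, where $\tilde{x}$ is the degree $-1$ constant derivation of $S'\Sigma^{-1}(V \otimes A)^*$ induced by the homotopy and $x \in (V \otimes A)^0$ is its associated element. Consider the one-parameter family $\xi_t \in V \otimes A$ represented by $\xi \circ e^{t[\tilde{x}, d]}$, so $\xi_0 = \xi$ and $\xi_1 = \eta$. Since $V$ is a dgla, the differential on the representing cdga has only components $d_1$ (dual to $d_V$) and $d_2$ (dual to the bracket); on a generator $u \in \Sigma^{-1}V^*$, a direct unpacking gives
\[
  [\tilde{x},d](u) = \tilde{x}(d_1 u) + \tilde{x}(d_2 u),
\]
where the first term lives in $k$ and dualises to pairing with $-d_V x$, while the second lives in $\Sigma^{-1}V^*$ and, upon applying $\xi_t$, dualises to $[x, \xi_t]$. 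Hence $\xi_t$ satisfies the ODE $\tfrac{d}{dt}\xi_t = \ad_x \xi_t - d_V x$ with $\xi_0 = \xi$. A direct term-by-term differentiation of \eqref{eq:gauge_dgla} shows that the gauge trajectory $t \mapsto e^{tx} \cdot \xi$ satisfies the same ODE with the same initial condition, so formal uniqueness of solutions (valid since the relevant expansions are term-by-term finite in the pronilpotent setting) forces $\eta = \xi_1 = e^x \cdot \xi$. The converse direction (1)$\Rightarrow$(2) is symmetric: given $\eta = e^x \cdot \xi$, take $\tilde{x}$ to be the constant derivation associated to $x$ and apply the same ODE argument to identify $\xi \circ e^{[\tilde{x},d]}$ with $\eta$.

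\emph{Main obstacle.} The principal technical difficulty is making the ODE comparison rigorous: one must identify the iterated derivation $[\tilde{x}, d]^n$ term-by-term with the $n$-th polynomial $\tfrac{(\ad_x)^{n-1}}{n!}(\ad_x \xi - d_V x)$ appearing in \eqref{eq:gauge_dgla}. This is exactly the combinatorial content of the rooted tree formulae developed earlier in Section~\ref{gauge_formulae}: expanding $\xi \circ e^{[\tilde{x}, d]}$ as a sum over binary rooted trees with internal nodes decorated by the bracket, leaves decorated by $x$ or $\xi$, and (at most) one distinguished leaf decorated by $d_V x$, one must check that the resulting symmetry factors match the Taylor coefficients above. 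A secondary point is that \cref{result:gauge_short_b} is formulated for $V = \lie{g} \otimes A$ with $\lie{g}$ finite-dimensional, whereas the theorem allows arbitrary complete $V$; however, the tree expansion of $\xi \circ e^{[\tilde{x}, d]}(u)$ at any fixed generator $u$ involves only finitely many components of $V$, so the general case follows by writing $V$ as the inverse limit of its finite-dimensional quotient dglas and applying the finite-dimensional result levelwise.
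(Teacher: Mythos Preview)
Your proof is correct and follows the paper's overall architecture: Step~1 is identical to the paper's argument, and your reduction of the infinite-dimensional case to finite-dimensional quotients via $V=\invlim V_i$ is exactly what the paper does.

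For Step~2 you take a slightly different route. The paper proves $(1)\Leftrightarrow(2)$ by invoking \cref{result:gauge_formula} directly: in a dgla only vertices of valence~$1$ or~$2$ occur, the coefficients $r,j_1,\dots,j_n$ all collapse to~$1$, and the tree sum becomes the gauge formula~\eqref{eq:gauge_dgla} on the nose. You instead compare the flows $t\mapsto \xi\circ e^{t[\tilde x,d]}$ and $t\mapsto e^{tx}\cdot\xi$ via the ODE $\dot\xi_t=\ad_x\xi_t-d_Vx$. This is a perfectly valid alternative, and in fact your ``main obstacle'' is overstated: because $d=d_1+d_2$ in a dgla, the identity $[\tilde x,d](u)=\tilde x(d_1u)+\tilde x(d_2u)$ already gives the ODE without any tree combinatorics, and term-by-term uniqueness in the pronilpotent filtration does the rest. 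So your argument is actually self-contained and bypasses \cref{result:gauge_formula} entirely for the dgla case; the paper's route, on the other hand, gets the general $L_\infty$ formula as a byproduct.
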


A proof that \ref{gaugeeq} and \ref{sullivanhtp} coincide already appears in the literature; see for example~\cite{ss} and~\cite[Theorem 4.4]{cl10}.
The originality of~\cref{thesame} lies in directly establishing the equivalence of \ref{gaugeeq} and \ref{lefthtp}. 
The proof that \ref{lefthtp} and \ref{sullivanhtp} coincide is purely model category theoretic, so this theorem also provides a new proof that gauge equivalence and Sullivan homotopy coincide.

The terminology and conventions we use for trees below follows those of~\cite{gk94}.
We will allow the empty tree, that is, a tree with no vertices. 
Additionally, given a rooted tree $T$ and a natural number $k$, we say that the \emph{maximal height $k$ sub-tree of\/ $T$} is the rooted sub-tree of $T$ consisting of all vertices with a path to the root with length at most $k$, together with their internal edges and leaves.

\begin{thm}
  \label{result:gauge_formula}
  Let $V$ be a complete $L_{\infty}$-algebra, and let $x \in V^0$, $\xi \in \MC(V)$. Then
  \[
    x * \xi = \sum_T \frac{(-1)^n r}{n! j_1! \dots j_n!} T(x,\xi),
  \] 
  where the sum is taken over all rooted trees $T$ such that every vertex has at least one leaf, and for each rooted tree $T$, 
  \begin{enumerate}
    \item $n$ is the number of vertices of $T$;
    \item $r$ is the number of orderings of the vertices of $T$ such that each vertex is greater than its parent;
    \item $T(x,\xi)$ is the unique word associated to $T$ given by labelling exactly one leaf on each vertex by $x$ and all remaining leaves by $\xi$, and associating to each degree $i$ vertex with inputs $\eta_1, \dots, \eta_{i-1}, x$ the operation $[\eta_1, \dots, \eta_{i-1}, x]$;
    \item $j_1, \dots, j_n$ are the numbers of $\xi$ attached to each of the $n$ vertices.
  \end{enumerate}
\end{thm}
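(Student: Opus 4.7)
The plan is to start from the explicit formula $x * \xi = H_{\xi,x} \circ e^{\theta} \circ i$ given by the construction preceding \cref{result:gauge_short_a} and unwind it combinatorially. Applying~\eqref{eq:exptheta} together with $H_{\xi,x}(u) = \xi(u)$ and $H_{\xi,x}(\widehat{u}) = 0$, evaluating the Maurer--Cartan element on a generator $u \in U = \Sigma^{-1}V^*$ becomes
\[
  (x * \xi)(u) = \xi(u) + \sum_{n \geq 1} \frac{1}{n!}\,H_{\xi,x}\bigl((sD)^n(u)\bigr).
\]
Because $D(\bar v) = \widehat v$, $D(\widehat v) = 0$, and $H_{\xi,x}$ kills every $\widehat v$, the only surviving contributions are those in which $D$, at each of the $n$ rounds, differentiates a plain $u$ rather than the $\bar u$ produced by the preceding $s$. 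A surviving monomial after $n$ rounds thus contains exactly $n$ factors of $\bar u$ (which yield $x$ under $H_{\xi,x}$) and some number of remaining $u$'s (yielding $\xi$).

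Next I would set up a bijection between such histories and pairs $(T,<)$ consisting of a rooted tree $T$ on $n$ vertices and a linear ordering of its vertices extending the root-to-leaf partial order. Each $D$-step selects one $u$-slot from the current monomial and replaces it by $d(u)$, creating a new vertex whose arity equals the arity of the chosen homogeneous component of $d$; the following $s$-step converts one of the newly produced $u$'s into $\bar u$, which under $H_{\xi,x}$ will become the distinguished $x$-leaf of that vertex. Since any $u$-slot available at round $k$ must belong to a vertex already created, the round numbering is precisely a linear order on the vertices in which every child exceeds its parent. Summing over all such orderings of a fixed tree $T$ contributes the factor~$r$, which assembles with the $1/n!$ from the exponential into $r/n!$.

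The remaining work is to collect the numerical coefficients and signs. The factor $1/j_k!$ at vertex $k$ comes from the relation $m_i = \tfrac{1}{i!}\ell_i^*$ between the derivation $d$ on $S\Sigma^{-1}V^*$ and the graded symmetric brackets on $V$: after singling out the unique $x$-leaf at each vertex, the remaining $j_k$ $\xi$-inputs are indistinguishable, leaving a symmetry factor $1/j_k!$. The sign $(-1)^n$ is accumulated from the $n$ Koszul swaps of the degree $-1$ operator $s$ past the degree $+1$ operator $D$, together with the sign dictionary between $SU$ and the bracket conventions fixed in \cref{prelim}. Regrouping histories of common underlying shape $T$ and identifying the resulting word with $T(x,\xi)$ then yields the claimed formula. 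The hardest part will be precisely this sign and factorial bookkeeping, where one has to translate carefully between the symmetric-algebra picture on $SU$ (with its built-in factorials $1/i!$) and the antisymmetric bracket picture on $V$; once the conventions are pinned down, the substance of the argument is the tree-history bijection of the second paragraph.
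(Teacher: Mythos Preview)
Your plan is essentially the paper's own argument, with one organizational difference: the paper first invokes \cref{result:gauge_short_a} to rewrite $x*\xi$ as $\xi\circ e^{[\tilde x,d]}$ on $SU$ and then expands $(\tilde x\,d)^n$, whereas you stay inside the cylinder $C(SU)$ and expand $(sD)^n$ before applying $H_{\xi,x}$. These computations are literally the same via the intertwining $H_{\id,x}\,s=\tilde x\,H_{\id,x}$ and $H_{\id,x}\,D=d\,H_{\id,x}$ established in the proof of \cref{result:gauge_short_a}; your route simply avoids quoting that theorem. The tree--history bijection and the assembly of the factor $r/n!$ are identical in both.

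There is, however, one genuine inaccuracy in your bijection. You assert that ``the following $s$-step converts one of the newly produced $u$'s into $\bar u$.'' That is not what $s$ does: $s$ is a derivation and sums over converting \emph{any} $u$-factor in the monomial, including leaves attached to vertices created in earlier rounds. So a raw history records more data than just a monotone ordering of the vertices, and your claimed bijection with pairs $(T,<)$ is not yet a bijection. The repair is exactly the step the paper supplies and you omit: after applying $H_{\xi,x}$ one is evaluating graded-symmetric operations $\ell_i$ on inputs in $\Sigma V$, where $x$ has odd degree; hence any vertex receiving two $x$-labels vanishes, and since there are $n$ labels and $n$ vertices the surviving terms have exactly one $x$ per vertex. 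One may then normalise (as the paper does) so that compositions and the $x$-label at each round go to a fixed slot, after which histories are precisely monotone orderings and the count $r$ is correct. This graded-symmetry step is also what converts the naive factor $1/i_k!$ coming from $m_i=\tfrac{1}{i!}\ell_i^*$ into the $1/j_k!$ in the statement, so your final paragraph's bookkeeping cannot be completed without it. Once you insert this argument, your proposal and the paper's proof coincide.
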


\begin{proof} 
  As before, we write $U = \Sigma^{-1} V^*$. 
  Using that $\tilde{x}^2 = 0$ (as the square of a constant odd derivation is 0), the same calculation as for the series $e^{\theta}$ in equation~\eqref{eq:exptheta} gives
  \[
  e^{[\tilde{x},d]} (u) = u + \sum_{n=1}^{\infty} \frac{(\tilde{x}d)^n(u)}{n!}
  \]
  for $u \in U$.
  From \cref{result:gauge_short_a} and \cref{result:gauge_short_b}, the Maurer--Cartan element $x * \xi$ is represented by the cdga map $\xi \circ e^{[\tilde{x},d]} \colon SU \to k$, so the restriction of $\xi \circ e^{[\tilde{x},d]}$ to $U$ is $\ev_{x * \xi}$. 
  Applying $\xi \circ e^{[\tilde{x},d]}$ to an element $u \colon \Sigma V \to k$ in $U$ is equivalent to forming a tree by successive compositions.
  Since $\tilde{x}$ and $d$ are derivations, each $(\tilde{x}d)^n (u)$ is a sum of words determined by sequences $d_{i_1}$, $d_{i_2}$, \dots, $d_{i_n}$, for any $i_1, \dots, i_n \ge 1$. 
  Hence at each step:
  \begin{enumerate}
    \item Applying $d_i$ to $u \in U$ gives the composition $\frac{1}{i!} u \circ \ell_i$. 
      Applying $\id^{\otimes j-1} \otimes d_i \otimes \id^{\otimes k-j}$ to an element of $U^{\otimes k}$ therefore corresponds to composition with an $i$-star along $j$. 
    %By our convention, no signs arise.
    \item Applying $\tilde{x}$ to $u \in U$ is the evaluation $\ev_x(u)$. 
      Applying $\id^{\otimes j-1} \otimes \tilde{x} \otimes \id^{\otimes k-j}$ to an element of $U^{\otimes k}$ therefore corresponds labelling the $j$th leaf with $x$. 
    %This leads to a single minus sign. 
    \item Applying $\xi$ to $u \in U$ is the evaluation $\ev_{\xi}(u)$. 
      Since $\xi$ extends to a cgda map, this corresponds to labelling all remaining leaves with $\xi$.
  \end{enumerate}

  Every sequence $d_{i_1}$, $d_{i_2}$, \dots, $d_{i_n}$ gives words of the form $(-1)^n T(x,\xi) / i_1! \dots i_n! $. Indeed, regard $\xi$ and $x$ as elements in $\Sigma V$, so that $\xi$ and $x$ have degrees 0 and $-1$ respectively.
  Then by graded symmetry of the $\ell_i$, there must be exactly one $x$ on each vertex, and every term can be written as $\ell_i(\eta_1,\dots,\eta_{i-1},x)$, which equals $[\eta_1,\dots,\eta_{i-1},x]$ by our grading convention.
  Finally, each $\tilde{x}d$ introduces a sign $-1$, as both $\ell_i$ and $x$ both have odd degree.

  To determine the coefficient, it remains to count how many ways compositions give rise to the same word.
  By graded commutativity, we can form the trees such that each composition or labelling by $x$ always fills the last unlabelled leaf on each vertex.
  With this restriction, the number of ways a tree can be built is $r$, the number of monotone orderings of its vertices.
\end{proof}

When $V$ is a dgla, the above formula only allows rooted trees with vertices of valence 1 or 2, and the coefficients $r$, $j_1$, \dots, $j_n$ equal 1 for every tree. 
This recovers the formula~\eqref{eq:gauge_dgla} for gauge equivalence in dglas, and recovers the formula of~\cite[Proposition 5.9]{get09} in the case of $L_{\infty}$-algebras. 

Similarly we can use \cref{result:gauge_short_a} and \cref{result:gauge_short_b} to obtain the following analogous formula for the $A_{\infty}$-case.

\begin{thm}
  \label{result:gauge_formula_ainf}
  Let $A$ be a complete $A_{\infty}$-algebra, and let $x \in A^0$, $\xi \in \MC(A)$. Then
  \[
    x * \xi = \sum_T \sum_{\lambda} \frac{(-1)^n}{n!} T_{\lambda}(x,\xi),
  \] 
  where the sum is taken over all planar rooted trees $T$, and for each rooted tree $T$, 
  \begin{enumerate}
    \item $n$ is the number of vertices of $T$;
    \item $\lambda$ ranges over labellings of $T$ that label $n$ leaves by $x$ and all remaining leaves by $\xi$, such that for any $1 \le k \le n$, the maximal height $k$ sub-tree of\/ $T$ has $k$ leaves labelled by $x$;
    \item $T_{\lambda}(x,\xi)$ is the word given by the labelling $\lambda$ and associating to each degree $i$ vertex the operation $m_i \colon T^i \Sigma A \to \Sigma A$.
  \end{enumerate}
\end{thm}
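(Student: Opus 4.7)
The strategy is to follow the scheme of \cref{result:gauge_formula}, but now with planar trees in place of abstract ones, since $A_{\infty}$-operations $m_i$ lack graded symmetry. By \cref{result:gauge_short_a}(2), $x * \xi$ is represented by the dga map
\[
  \xi \circ e^{[\tilde{x}, d]} \colon T\Sigma^{-1}V^* \to k,
\]
and since $\tilde{x}$ is a constant odd derivation of $T\Sigma^{-1}V^*$, exactly the same argument used in \cref{result:gauge_formula} (using $\tilde{x}^2=0$) gives
\[
  e^{[\tilde{x}, d]}(u) = u + \sum_{n\geq 1} \frac{(\tilde{x}d)^n(u)}{n!}
\]
for $u \in \Sigma^{-1}V^*$.

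I would then iteratively expand $(\tilde{x}d)^n(u)$ using the Leibniz rule for the derivations $d$ and $\tilde{x}$ on $T\Sigma^{-1}V^*$. At each step, applying $d$ to a leaf in $\Sigma^{-1}V^*$ produces $\sum_i m_i^*$, which dually corresponds to grafting a planar $i$-corolla at that leaf; applying $\tilde{x}$ at a leaf is evaluation at $x$ and corresponds to marking that leaf; and the final post-composition with $\xi$ marks all remaining leaves by $\xi$. Collecting terms, the result is indexed by planar trees $T$ with leaf labellings $\lambda$, and the word $T_{\lambda}(x,\xi)$ is precisely what one reads off the resulting decorated tree.

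The coefficient $(-1)^n/n!$ is simpler than in the $L_{\infty}$ case: the factor $1/n!$ comes straight from the exponential, and $(-1)^n$ is the Koszul sign produced by $n$ applications of the odd derivation $\tilde{x}$. Crucially, there is no $1/i!$ inside $m_i$ to produce the $j_k!$ factors appearing in \cref{result:gauge_formula}, and the planar rigidity means the monotone-ordering count $r$ is replaced by the explicit sum over labellings $\lambda$.

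The main obstacle will be identifying precisely which labellings occur with nonzero coefficient. Since $(\tilde{x}d)^n$ interleaves grafting with marking, a leaf of a vertex $v$ can only be marked once $v$ has been grafted by an earlier $d$-step; after aggregating all sequences of operations producing the same planar tree and accounting for Koszul sign cancellations between them, the surviving labellings are precisely those in which every $x$-bearing vertex has an $x$-bearing parent, which is the condition stated in the theorem. As a sanity check, when $V$ is a dga (only $m_2$ nonzero), every admissible planar tree becomes a linear chain with exactly one $x$-leaf per vertex, and the formula specialises to the classical gauge formula~\eqref{eq:gauge_dga}, mirroring the way \cref{result:gauge_formula} specialises to~\eqref{eq:gauge_dgla} in the dgla case.
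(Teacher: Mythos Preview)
Your proposal is correct and follows essentially the same route as the paper's own proof, which is extremely terse: the paper merely says the calculation is ``similar to the $L_{\infty}$-case in \cref{result:gauge_formula}, except the lack of graded commutativity means that each $m_i$ can take more than one $x$,'' and that each sequence $d_{i_1},\dots,d_{i_n}$ contributes a term of the form $(-1)^n T_{\lambda}(x,\xi)$. Your expansion of $e^{[\tilde{x},d]}$ via $(\tilde{x}d)^n$, the tree interpretation of iterated derivations, and the identification of the simpler coefficient $(-1)^n/n!$ (no $1/i!$ from $m_i^*$, no symmetrisation) all match the paper's intent. If anything, you are more explicit than the paper in flagging that the admissible labellings $\lambda$ arise only after aggregating the different operation sequences producing the same planar tree; the paper leaves this entirely implicit.
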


\begin{proof}
  The calculation is similar to the $L_{\infty}$-case in \cref{result:gauge_formula}, except the lack of graded commutativity means that each $m_i$ can take more than one $x$. 
  Each sequence $d_{i_1}$, $d_{i_2}$, \dots, $d_{i_n}$ gives words of the form $(-1)^n T_{\lambda}(x,\xi)$.
\end{proof}

\begin{proof}[Proof of~\cref{thesame}]
  First note that the equivalence of (\ref{lefthtp}) and (\ref{sullivanhtp}) is immediate by completeness of $V$. 
  Also, if $V$ is finite-dimensional, then the equivalence of (\ref{gaugeeq}) and (\ref{lefthtp}) is immediate by \cref{result:gauge_short_b} and \cref{result:gauge_formula}.  
  Finally in the infinite dimensional case, by completeness of $V$ we have $V = \invlim V_i$ where $V_i$ are all finite-dimensional complete dglas.
  Hence (\ref{gaugeeq}) and (\ref{lefthtp}) are equivalent for $V$, as they are equivalent for each $V_i$.
\end{proof}

\begin{rem}
  In the case where $V \otimes A$ is complete (in particular, when $A=k$), we can simplify the above proof, as the result is just a direct consequence of \cref{result:gauge_short_a} and \cref{result:gauge_formula}.
\end{rem}

\section{A strong homotopy Poincar\'e lemma}
\label{application}

The Poincar\'e lemma states that on a contractible manifold, every closed differential form of positive degree is exact.
The following non-abelian analogue to the Poincar\'e lemma is proved in~\cite[Theorem 3.1]{vor12}.  

\begin{thm}[Non-abelian Poincar\'e lemma]
  \label{result:poincare_dgla}
  Let $M$ be a contractible manifold and let $\lie{g}$ be a dgla. 
  Let $\xi$ be a $\lie{g}$-valued differential form on $M$ such that $\xi$ is a Maurer--Cartan element in $\lie{g} \otimes \Omega(M)$. Then $\xi$ is gauge equivalent to a constant.
\end{thm}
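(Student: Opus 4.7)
The plan is to reduce the statement to a left homotopy in the Hinich model category of cdgas, and then apply \cref{thesame}. Since $M$ is contractible, the classical Poincar\'e lemma for de Rham cohomology tells us that for any chosen point $p \in M$, evaluation at $p$ gives a cdga map $\ev_p \colon \Omega(M) \to k$ which is a quasi-isomorphism. (It is a cdga map because positive-degree forms vanish on points, and $d$ and the wedge product behave correctly under this.) Let $i \colon k \hookrightarrow \Omega(M)$ denote the inclusion of constants, so that $\ev_p \circ i = \id_k$.

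View the Maurer--Cartan element $\xi$ as a cdga morphism $\xi \colon S\Sigma^{-1}\lie{g}^* \to \Omega(M)$, and set $\xi_0 := i \circ \ev_p \circ \xi$. Then $\xi_0$ represents a Maurer--Cartan element that factors through $\lie{g} \otimes k \hookrightarrow \lie{g} \otimes \Omega(M)$; that is, $\xi_0$ is a constant $\lie{g}$-valued form (and it is Maurer--Cartan because the Maurer--Cartan condition is preserved by $\id \otimes i \circ \ev_p$). It suffices to show that $\xi$ and $\xi_0$ are gauge equivalent, which by \cref{thesame} is equivalent to showing that they are left homotopic as cdga morphisms.

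The key observation is that $\ev_p \circ \xi = \ev_p \circ \xi_0$ by $\ev_p \circ i = \id_k$. Now $S\Sigma^{-1}\lie{g}^*$ is cofibrant by completeness of $\lie{g}$, and every object is fibrant in the Hinich model structure on cdgas. Hence the weak equivalence $\ev_p$ between fibrant objects induces, by a standard model-categorical fact (post-composition with a weak equivalence between fibrant objects is a weak equivalence on mapping spaces with cofibrant source), a bijection on homotopy classes
\[
  (\ev_p)_* \colon [S\Sigma^{-1}\lie{g}^*,\Omega(M)] \xrightarrow{\ \sim\ } [S\Sigma^{-1}\lie{g}^*,k].
\]
The equality $\ev_p \circ \xi = \ev_p \circ \xi_0$ then forces $\xi$ and $\xi_0$ to be homotopic in the cdga model category, so \cref{thesame} yields the desired gauge equivalence.

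The main obstacle is largely a conceptual one rather than a technical one: the entire argument hinges on finding the right way to convert the geometric input (contractibility of $M$) into a cdga statement. Contractibility translates into the Poincar\'e lemma for $\Omega(M)$, which is precisely the input needed to make $\ev_p$ a weak equivalence; everything after that is model category formalism and the equivalence between left homotopy and gauge equivalence that has already been established. An alternative, more hands-on approach would be to use an explicit chain homotopy $h$ on $\Omega(M)$ with $dh + hd = \id - i \circ \ev_p$ to write down a Sullivan homotopy between $\xi$ and $\xi_0$ directly, but the model-categorical argument above avoids manipulating such a formula.
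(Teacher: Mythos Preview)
The paper does not actually give its own proof of this statement: \cref{result:poincare_dgla} is stated as Voronov's result and cited from~\cite{vor12}. What the paper \emph{does} prove is the $L_\infty$-generalisation immediately following it, and that proof is essentially the same argument you give here: convert gauge equivalence to left homotopy via \cref{thesame}, use contractibility of $M$ to see that $\Omega(M)$ is weakly equivalent to the ground field, and conclude by the model-categorical bijection on homotopy classes out of a cofibrant source. Your version is a little more explicit---you name the point-evaluation $\ev_p$, construct the constant comparison element $\xi_0 = i\circ\ev_p\circ\xi$, and spell out the equality $\ev_p\circ\xi = \ev_p\circ\xi_0$ that forces the homotopy---whereas the paper compresses all of this into the single line ``homotopy classes of maps $S\lie{g}\to\Omega(M)$ correspond to homotopy classes of maps $S\lie{g}\to\mathbb{R}$''. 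Substantively the two arguments are identical.

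One small point worth flagging: the statement as written says only ``$\lie g$ a dgla'', but your proof (and the paper's) needs $\lie g$ complete, both so that $S\Sigma^{-1}\lie g^*$ is cofibrant and so that \cref{thesame} applies. This is implicit in the paper's conventions (gauge equivalence is only defined for complete dglas), so it is not a gap in your argument, but you may want to say so explicitly.
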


It was suggested by Voronov that~\cref{result:poincare_dgla} may be extended to $L_{\infty}$-algebras. 
Here, we prove such a statement as an application of the results from the previous sections. 

\begin{thm}[Strong homotopy Poincar\'e lemma]
  Let $M$ be a contractible manifold and let $\lie{g}$ be a complete $L_{\infty}$-algebra. 
  If\/ $\xi$ is a $\lie{g}$-valued differential form on $M$ that is Maurer--Cartan, then $\xi$ is gauge equivalent to a constant.
\end{thm}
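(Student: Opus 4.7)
The plan is to reduce the Poincar\'e lemma to the statement that the inclusion of constants $k \hookrightarrow \Omega(M)$ is a homotopy equivalence in the Hinich model category of cdgas, and then to translate the resulting model-categorical homotopy back into the gauge-equivalence language using the results of Section~\ref{left_cdga}. A Maurer--Cartan element $\xi \in \lie{g} \otimes \Omega(M)$ is represented by a cdga map $f_{\xi} \colon S\Sigma^{-1}\lie{g}^* \to \Omega(M)$, and $\xi$ is constant precisely when $f_{\xi}$ factors through the inclusion $i_p \colon k \hookrightarrow \Omega(M)$ at a chosen basepoint $p \in M$. It therefore suffices to show that $f_{\xi}$ is left homotopic to $i_p \circ \ev_p \circ f_{\xi}$, where $\ev_p \colon \Omega(M) \to k$ is evaluation at $p$.

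To produce the required homotopy, I would use $\Omega(M \times I)$ with $I = [0,1]$ as a good path object for $\Omega(M)$: the pullback $\pi^* \colon \Omega(M) \to \Omega(M \times I)$ along the projection is a quasi-isomorphism by de Rham homotopy invariance, and the endpoint evaluation $\Omega(M \times I) \to \Omega(M) \times \Omega(M)$ is a surjection (given $\alpha_0,\alpha_1 \in \Omega(M)$, take $(1-t)\pi^*\alpha_0 + t\,\pi^*\alpha_1$ in the $I$-coordinate $t$), hence a fibration in Hinich's structure. A smooth contraction $H \colon M \times I \to M$ with $H_0 = \id_M$ and $H_1$ constant at $p$ then gives a cdga map $H^* \colon \Omega(M) \to \Omega(M \times I)$, and precomposing with $f_{\xi}$ yields a right homotopy between $f_{\xi}$ and $i_p \circ \ev_p \circ f_{\xi}$ with respect to this path object.

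Since $\lie{g}$ is complete, the cdga $S\Sigma^{-1}\lie{g}^*$ is cofibrant, and every cdga is fibrant; hence right homotopy is independent of the chosen good path object and coincides with left homotopy. Transferring the homotopy above to the standard path object $\Omega(M)[t,dt]$ gives a Sullivan homotopy, and therefore a left homotopy, between $\xi$ and the constant Maurer--Cartan element $(\id \otimes \ev_p)(\xi)$. In the framework of this paper this is precisely the assertion that the two are gauge equivalent: for dglas by~\cref{thesame}, and for general complete $L_{\infty}$-algebras by the identification of left homotopy with the gauge-action formulae of~\cref{result:gauge_short_a} and~\cref{result:gauge_formula}. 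The only non-formal inputs are that $\pi^*$ is a quasi-isomorphism and that endpoint evaluation on $\Omega(M \times I)$ is surjective; both are classical de Rham-theoretic facts, and the remainder of the argument is pure model-category manipulation.
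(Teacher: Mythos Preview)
Your proof is correct and follows the same model-categorical strategy as the paper: both reduce to the fact that $\Omega(M)\simeq k$ for contractible $M$, then use cofibrancy of $S\Sigma^{-1}\lie{g}^*$ together with the identification of gauge equivalence with left homotopy established in Sections~\ref{left_cdga}--\ref{gauge_formulae}. The only difference is presentational: the paper invokes the abstract bijection $\mathscr{MC}(\lie{g},\Omega(M))\cong\mathscr{MC}(\lie{g})$ induced by the weak equivalence, whereas you spell this out by exhibiting $\Omega(M\times I)$ as a good path object and using the contraction $H$ to produce an explicit right homotopy to the constant $(\id\otimes\ev_p)(\xi)$.
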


\begin{proof}
  By~\cref{thesame}, two Maurer--Cartan elements in $\lie{g} \otimes \Omega(M)$ are gauge equivalent if and only if their representing cdga maps are left homotopic. 
  But $M$ is contractible, so $\Omega(M)$ is weakly equivalent to $\mathbb{R}$. By nilpotence of $\lie{g}$, homotopy classes of maps $S\lie{g} \to \Omega(M)$ correspond to homotopy classes of maps $S\lie{g} \to \mathbb{R}$. Hence $\mathscr{MC}(\lie{g},\Omega(M)) \cong \mathscr{MC}(\lie{g})$.
\end{proof}

%\bibliographystyle{alphaabbrurl}
%\bibliography{gauge_linf_references}
%\setlength{\parindent}{0pt}

\setlength{\parindent}{0pt}
\end{document}